\def\({\left(}
\def\){\right)}
\def\lm{\lambda}
\def\R{\mathbb R}
\def\Seq{\mathbf{Seq}}
\def\<{\langle}
\def\>{\rangle}
\renewcommand\phi{\varphi}
\def\bbeta{{\boldsymbol\beta}}
\def\ppi{{\boldsymbol\pi}}
\renewcommand\epsilon{\varepsilon}
\def\suchthat{\mathbin{\rm |}}
\def\bbeta{{\boldsymbol\beta}}
\newtheorem{theorem}{Theorem}
\newtheorem{proposition}[theorem]{Proposition}
\newtheorem{lemma}[theorem]{Lemma}
\newtheorem{remark}[theorem]{Remark}
\newtheorem{definition}[theorem]{Defintition}
\newtheorem{example}[theorem]{Example}
\def\Gal{\mathbf{Gal}}
\def\gal{\mathop{\mathrm{gal}}}
\def\f{\mathbf{f}}
\def\le{\leqslant}
\def\ge{\geqslant}
\renewcommand{\labelenumi}{{\rm\theenumi}}
\renewcommand{\theenumi}{{\rm(\arabic{enumi})}}
\def\={\equiv}
\def\Ch{\mathbf{Ch}}
\def\BS{\mathrm{BS}}
\def\GL{\mathrm{GL}}
\title{Galleries for root subsystems}
\author{Vladimir Shchigolev}
\address{Financial University under the Government of the Russian Federation, 49 Leningradsky Prospekt, Moscow, Russia}
\email{shchigolev\_vladimir@yahoo.com}
\begin{document}

\begin{abstract} 
We consider projection and lifting of labelled galleries to and from roots subsystems. 
Our constructions allow us to construct some topological embeddings of Bott-Samelson varieties
skew equivariant with respect to the compact torus and order-preserving on the sets of points
fixed by it.
\end{abstract}

\maketitle

\section{Introduction}

Let $E$ be a Euclidian space and $\Phi\subset E$ be a finite root system. We require neither that $\Phi$ span $E$ 
nor that $\Phi$ be crystallographic. A subset $\Psi\subset\Phi$ is called a {\it root subsystem} if 
it is not empty and is stable under the reflections with respect to the roots $\alpha\in\Psi$ 
(in other words, if $\Psi$ is a root system in its own right). 
Root subsystems were objects of extensive study, see for example~\cite{Wallach} and~\cite{Oshima}.
Moreover A.\;Borel and J. de Siebental~\cite{Borel_de_Siebental} established a connection between closed connected subgroups 
of compact Lie subgroups of maximal rank and the maximal closed root subsystems.
The aim of this paper is to relate  
chambers and galleries for the root systems 
$\Phi$ and $\Psi$ 
and apply these results to Bott-Samelson varieties.

For any root $\alpha\in\Phi$, we denote by $L_\alpha$ the hyperplane in $E$ perpendicular to $\alpha$.
We consider $E$ with respect to the metric topology and for any subset $S\subset E$ we denote by $\bar S$ its closure.
For a subset $X\subset\Phi$, we denote the connected components of the difference
$E\setminus\bigcup_{\alpha\in X}L_\alpha$ by $\Ch_X$ and call them {\it chambers} with respect to $X$. 
We say that a chamber $C\in\Ch_X$ is {\it attached} to $L_\alpha$ if $\overline C\cap L_\alpha$ has a nonempty interior
in $L_\alpha$. Similarly, two chambers $C,D\in\Ch_X$ are {\it connected} through $L_\alpha$ if
$\overline C\cap\overline D\cap L_\alpha$ has a nonempty interior in $L_\alpha$. 

For any chamber $C\in\Ch_\Phi$, we denote by $C_\Psi$ the unique chamber of $\Ch_\Psi$ containing $C$. 
Proposition~\ref{proposition:1}, allows us to make a (labelled) gallery of chambers of $\Ch_\Psi$ 
from a gallery of chambers of $\Ch_\Phi$ in the obvious way, see Section~\ref{Projection}.
We call this procedure {\it projection}.

The transition in the opposite direction, called here {\it lifting}, is more difficult and is not unique.
First we have to lift chambers of $\Ch_\Psi$ to chambers of $\Ch_\Phi$, see Theorem~\ref{theorem:1}.
Then we prove Theorem~\ref{theorem:2} for galleries. This result establishes 
existence and uniqueness in a weak form. 
 
The projection and lifting procedures applied successively allow us to construct the so-called $p$-pairs of galleries
described by some wall crossing properties, see Definition~\ref{def:1}. We also
discuss gluing of galleries produced this way, see Section~\ref{Gluing}. 
 
Finally, we embed Bott-Somelson varieties into one another in Section~\ref{Bott-Samelson varieties}
as an application of the theory we develop.  
Our main result here is Theorem~\ref{theorem:4},
which asserts the existence of topological embeddings of Bott-Samelson varieties 
skew equivariant with respect to the compact torus $K$ and order-preserving on the set of the $K$-fixed points.

\section{Roots subsystems}\label{Roots_subsystems}

\subsection{Definitions and notation} We assume in the sequel the notation of the introduction. 
We denote by $\omega_\alpha$ the reflection of $E$ through $L_\alpha$. 
The Weyl group $W$ is the subgroup of $\GL(E)$ generated by all reflections $\omega_\alpha$. 
We denote its neutral element by $1$. The scalar product on $E$ is denoted by $(\cdot,\cdot)$.
For any subset $X\subset E$, we set $-X=\{-x\suchthat x\in X\}$ and denote by $\R X$ the set of all $\R$-linear combinations 
$c_1x_1+\cdots+c_nx_n$, where $c_1,\ldots,c_n\in\R$ and $x_1,\ldots,x_n\in X$.
This linear combination is called {\it positive} (respectively, {\it negative}) if $c_1,\ldots,c_n>0$ (respectively, $c_1,\ldots,c_n<0$).

We will generally follow~\cite{Steinberg} and recall now briefly the main definitions introduced therein.
An {\it ordering} of $E$ is a subset $E^+\subset E$ closed under addition and multiplication 
by positive scalars such that $E^+\cap(-E^+)=0$ and $E^+\cup(-E^+)=E$.
The {\it positive} (respectively, {\it negative}) {\it system} of $\Phi$ relative to this ordering is the intersection $\Phi^+=\Phi\cap E^+$
(respectively, $\Phi^-=\Phi\cap(-E^+)$). The {\it fundamental chamber} of this positive system is
$$
\{e\in E\suchthat \forall \alpha\in\Phi^+\;(e,\alpha)>0\}.
$$
A {\it simple system} of $\Phi$ is a subset $X\subset\Phi$ that is linearly independent
and any root of $\Phi$ is either a positive or a negative linear combination of elements of $X$.  

For any chamber $C\in\Ch_\Phi$, we denote by $\Phi^+(C)$ the positive system with fundamental chamber $C$, that is,
$$
\Phi^+(C)=\{\alpha\in\Phi\suchthat\forall e\in C\; (e,\alpha)>0\}.
$$
Clearly, $C$ is the fundamental chamber of $\Phi^+(C)$.

We also define $\Phi^-(C)=-\Phi^+(C)$ and choose the (unique) simple system $\Phi^s(C)$ contained in $\Phi^+(C)$.
These operators are $W$-invariant:
\begin{equation}\label{eq:10}
\Phi^+(wC)=w\Phi^+(C),\quad \Phi^-(wC)=w\Phi^-(C),\quad \Phi^s(wC)=w\Phi^s(C)
\end{equation}
for any $w\in W$. We will use the similar notation $\Psi^+(C)$, $\Psi^-(C)$ and $\Psi^s(C)$ for $C\in\Ch_\Psi$.

The proof of the following simple result is left to the reader.
\begin{proposition}\label{proposition:1} (i) Let $C$ and $D$ be chambers of $\Ch_\Phi$ connected through $L_\beta$ for $\beta\in\Phi\setminus\Psi$.
Then $C_\Psi=D_\Psi$.

\vspace{2pt}

(ii) Let $C$ and $D$ be chambers of $\Ch_\Phi$ connected through $L_\alpha$ for $\alpha\in\Psi$. Then $C_\Psi$ and $D_\Psi$
are also connected through $L_\alpha$. Moreover, $C_\Psi=D_\Psi$ if and only if $C=D$.
\end{proposition}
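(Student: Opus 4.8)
The plan is to reduce both claims to a local analysis near a generic point of the separating wall, resting on two structural facts. First, since $\Psi\subset\Phi$ the arrangement $\{L_\gamma\}_{\gamma\in\Psi}$ is a subarrangement of $\{L_\gamma\}_{\gamma\in\Phi}$, so every chamber of $\Ch_\Phi$ is contained in a unique chamber of $\Ch_\Psi$; in particular $C\subset C_\Psi$ for every $C\in\Ch_\Phi$. Second, any connected subset of $E$ meeting no wall $L_\gamma$ with $\gamma\in\Psi$ lies in a single chamber of $\Ch_\Psi$, and any chamber of $\Ch_\Psi$ lies entirely on one side of each of its walls. These facts will carry the conceptual content.

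Next I would fix the common local picture. Assume $C,D\in\Ch_\Phi$ are connected through $L_\gamma$, where $\gamma=\beta$ in (i) and $\gamma=\alpha$ in (ii), and that $C\neq D$, the case $C=D$ being immediate. By hypothesis $\overline C\cap\overline D\cap L_\gamma$ has nonempty interior in $L_\gamma$. Because each remaining wall $L_\delta\neq L_\gamma$ meets $L_\gamma$ in a subset of positive codimension inside $L_\gamma$, and there are only finitely many walls, I can choose a point $p$ in this relative interior lying on $L_\gamma$ and on no other wall of $\Phi$. I then take a ball $B$ around $p$ small enough to avoid every wall not containing $p$, so that $B$ meets $L_\gamma$ and no other wall of $\Phi$. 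The two components $B^+,B^-$ of $B\setminus L_\gamma$ each avoid all walls of $\Phi$, hence lie in single chambers of $\Ch_\Phi$; since $p\in\overline C\cap\overline D$ and each chamber is one-sided with respect to $L_\gamma$, I may relabel so that $B^+\subset C$ and $B^-\subset D$ lie on opposite sides of $L_\gamma$.

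For (i) the root $\beta\notin\Psi$, so $L_\beta$ is not a wall of $\Psi$ (otherwise $\pm\beta\in\Psi$, contradicting $\beta\notin\Psi$ as $\Psi$ is reflection stable); hence $B$ meets no wall of $\Psi$ and, being connected, lies in a single chamber $F\in\Ch_\Psi$. Then $B^+\subset C\cap F$ and $B^-\subset D\cap F$, and uniqueness of the containing $\Psi$-chamber yields $C_\Psi=F=D_\Psi$. For (ii) the root $\alpha\in\Psi$, so $L_\alpha$ is a wall of $\Psi$ and, by the choice of $B$, the only one meeting $B$. Thus $B^+$ and $B^-$ lie in single $\Psi$-chambers, which must be $C_\Psi$ and $D_\Psi$ respectively; since $p\in L_\alpha$ lies in the closure of both and $B\cap L_\alpha$ is open in $L_\alpha$, the chambers $C_\Psi$ and $D_\Psi$ are connected through $L_\alpha$. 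For the equivalence, the implication $C=D\Rightarrow C_\Psi=D_\Psi$ is trivial; conversely, were $C_\Psi=D_\Psi=:F$ with $C\neq D$, the relabelling above would give $B^+,B^-\subset F$ on opposite sides of $L_\alpha$, contradicting that $F$ lies on one side of its wall $L_\alpha$, so $C=D$.

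The containment and uniqueness statements, as well as the one-sidedness of chambers, are routine. The step that demands genuine care, and which I regard as the main obstacle, is the general-position construction of $p$ and $B$: I must check that the relative interior of $\overline C\cap\overline D\cap L_\gamma$ is not exhausted by the finitely many lower-dimensional slices $L_\delta\cap L_\gamma$, and that the half-balls $B^\pm$ are genuinely captured by $C$ and $D$ rather than by some third chamber — which is exactly where the hypothesis that $C$ and $D$ are connected through $L_\gamma$, together with the one-sidedness of chambers, is used.
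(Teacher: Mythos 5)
Your proof is correct; the paper explicitly leaves this proposition to the reader, so there is no proof to compare against, but your general-position argument (choosing a point $p$ of $\overline C\cap\overline D\cap L_\gamma$ lying on no other wall and analysing the two half-balls $B^{\pm}$) is the standard intended one, and the delicate steps you flag --- existence of $p$, the half-balls being captured by $C$ and $D$, one-sidedness of chambers --- are all handled correctly. The only tacit assumption is reducedness of $\Phi$ (parallel roots are negatives of each other), which the paper itself needs as well: otherwise a wall $L_\beta$ with $\beta\notin\Psi$ could coincide with $L_\alpha$ for some $\alpha\in\Psi$ and part (i) would be false.
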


Our favorite root subsystems are given by the following result.

\begin{lemma}\label{lemma:0} Let $\Psi$ be a nonempty subset of $\Phi$.
The following conditions are equivalent:
{\renewcommand{\labelenumi}{{\rm\theenumi}}
\renewcommand{\theenumi}{{\rm(\roman{enumi})}}
\begin{enumerate}
\item\label{lemma:0:p:1} $\Psi=\R X\cap\Phi$ for some nonempty
                         subset $X\subset\Phi$;\\[-10pt]
\item\label{lemma:0:p:2} $\Psi=\R\Psi\cap\Phi$.
\end{enumerate}}
\noindent
A subset $\Psi$ satisfying these conditions is a root subsystem of $\Phi$.
\end{lemma}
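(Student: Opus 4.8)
The plan is to dispose of the equivalence of \ref{lemma:0:p:1} and \ref{lemma:0:p:2} first, since it reduces to an elementary statement about linear spans, and then to verify the root-subsystem property directly from the reflection formula.

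The implication \ref{lemma:0:p:2}$\,\Rightarrow\,$\ref{lemma:0:p:1} is immediate: I would simply take $X=\Psi$, which is nonempty by hypothesis. For \ref{lemma:0:p:1}$\,\Rightarrow\,$\ref{lemma:0:p:2}, suppose $\Psi=\R X\cap\Phi$. The key observation is that $X\subset\Psi$, because every $x\in X$ lies in $\Phi$ and trivially in $\R X$, hence in $\R X\cap\Phi=\Psi$. Consequently $X\subset\Psi\subset\R X\cap\Phi\subset\R X$, and passing to spans gives $\R X\subset\R\Psi\subset\R X$, so that $\R X=\R\Psi$. Intersecting with $\Phi$ then yields $\Psi=\R X\cap\Phi=\R\Psi\cap\Phi$, which is \ref{lemma:0:p:2}.

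It remains to show that a subset $\Psi$ satisfying \ref{lemma:0:p:2} is a root subsystem; since $\Psi$ is assumed nonempty, this amounts to checking that $\omega_\alpha\beta\in\Psi$ for all $\alpha,\beta\in\Psi$. Here I would use the explicit reflection formula $\omega_\alpha\beta=\beta-\frac{2(\beta,\alpha)}{(\alpha,\alpha)}\alpha$, which exhibits $\omega_\alpha\beta$ as an $\R$-linear combination of $\alpha,\beta\in\Psi$, so that $\omega_\alpha\beta\in\R\Psi$. On the other hand, $\Phi$ is a root system and is therefore stable under the reflection $\omega_\alpha$ attached to its root $\alpha$; as $\beta\in\Phi$, this gives $\omega_\alpha\beta\in\Phi$. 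Combining the two memberships and invoking \ref{lemma:0:p:2}, I obtain $\omega_\alpha\beta\in\R\Psi\cap\Phi=\Psi$, as required.

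I do not anticipate a genuine obstacle: the whole argument rests on the single identity $\R X=\R\Psi$ together with the $W$-stability of $\Phi$. The only point meriting care is the inclusion $X\subset\Psi$ in the direction \ref{lemma:0:p:1}$\,\Rightarrow\,$\ref{lemma:0:p:2}, since without it one cannot legitimately replace $X$ by $\Psi$ inside the span; everything else is a formal manipulation of spans and intersections.
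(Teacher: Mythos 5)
Your proposal is correct and follows essentially the same route as the paper: take $X=\Psi$ for one direction, deduce $\R X=\R\Psi$ from the mutual inclusions $X\subset\Psi\subset\R X$ for the other, and obtain the root-subsystem property from the stability of both $\R\Psi$ and $\Phi$ under the reflections $\omega_\alpha$ for $\alpha\in\Psi$. You merely spell out two steps the paper leaves implicit (the inclusion $X\subset\Psi$ and the explicit reflection formula), which is fine.
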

\begin{proof} Clearly~\ref{lemma:0:p:2} implies~\ref{lemma:0:p:1}.
Suppose conversely that~\ref{lemma:0:p:1} is satisfied. As $\Psi\subset\R X$, we get $\R\Psi\subset\R X$.
On the other hand, $X\subset\Psi$, whence $\R X\subset\R\Psi$. Thus $\R X=\R\Psi$ and we get~\ref{lemma:0:p:2}.
The last claim holds, as both $\R\Psi$ and $\Phi$ are stable under the reflections with respect to the roots of $\Psi$.
\end{proof} 
\noindent
If the conditions of this lemma hold, we say that $\Psi$ is {\it saturated} in $\Phi$.

\subsection{Lifting}\label{Lifting} Let $C\in\Ch_\Phi$ and $D\in\Ch_\Psi$.
We say that $C$ {\it lifts} $D$ if $\Psi^s(D)\subset\Phi^s(C)$.
Obviously, the last inclusion implies $\Psi^+(D)\subset\Phi^+(C)$.
The converse is not true, see Example~\ref{example:1}.
%

\begin{lemma}\label{lemma:1}
If $C$ lifts $D$, then $C_\Psi=D$.
\end{lemma}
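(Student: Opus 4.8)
The plan is to establish the set-theoretic inclusion $C\subset D$ and then to conclude by the uniqueness built into the definition of $C_\Psi$. Since $C$ lifts $D$, the text already records that the inclusion $\Psi^s(D)\subset\Phi^s(C)$ forces $\Psi^+(D)\subset\Phi^+(C)$; this inclusion of positive systems is in fact all that the argument needs, so the simple systems themselves play no direct role.

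Next I would recall the description of the two chambers as fundamental chambers of their positive systems. Every $e\in C$ satisfies $(e,\alpha)>0$ for all $\alpha\in\Phi^+(C)$, since $C$ is the fundamental chamber of $\Phi^+(C)$. In the same way, $D$ is the fundamental chamber of $\Psi^+(D)$, so $D=\{e\in E\suchthat (e,\alpha)>0\ \text{for all}\ \alpha\in\Psi^+(D)\}$. The inclusion $C\subset D$ is then immediate: fix $e\in C$ and $\alpha\in\Psi^+(D)$; because $\Psi^+(D)\subset\Phi^+(C)$ we have $\alpha\in\Phi^+(C)$, hence $(e,\alpha)>0$. Thus $e$ satisfies every defining inequality of $D$, so $e\in D$, and therefore $C\subset D$.

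To finish, I observe that $D$ is a chamber of $\Ch_\Psi$ that contains $C$. Since $C$ is connected and avoids every $L_\alpha$ with $\alpha\in\Psi\subset\Phi$, it is contained in a single chamber of $\Ch_\Psi$, and $C_\Psi$ is by definition that unique chamber; as $D$ is such a chamber, uniqueness yields $C_\Psi=D$. I do not expect a real obstacle here: the whole content is the observation that the positivity conditions cutting out $D$ form a subfamily of those cutting out $C$. The only points needing care are bookkeeping ones, namely confirming that $D$ is genuinely the fundamental chamber of $\Psi^+(D)$ (so that it is determined by the inequalities indexed by $\Psi^+(D)$ alone) and that the well-definedness of $C_\Psi$ legitimately upgrades the inclusion $C\subset D$ to the equality $C_\Psi=D$.
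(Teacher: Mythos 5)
Your proof is correct and follows the same route as the paper's: deduce $\Psi^+(D)\subset\Phi^+(C)$ from the lifting hypothesis, use the fundamental-chamber descriptions to get $C\subset D$, and conclude $C_\Psi=D$ by the uniqueness of the chamber of $\Ch_\Psi$ containing $C$. You merely spell out the inequalities more explicitly than the paper does.
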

\begin{proof} As $C$ and $D$ are the fundamental chambers of $\Phi^+(C)$ and $\Psi^+(D)$, respectively 
and $\Psi^+(D)\subset\Phi^+(C)$, we get $C\subset D$. Hence by definition $D=C_\Psi$.
%
\end{proof}

\begin{theorem}\label{theorem:1} (i) 
Suppose that $\Psi$ is saturated in $\Phi$.
Then for any $D\in\Ch_\Psi$, 
there exists at least one chamber $C\in\Ch_\Phi$ lifting~$D$.

\!\!\!(ii) Suppose that a chamber $C\in\Ch_\Phi$ lifts a chamber $D\in\Ch_\Psi$. 
Then $\Psi$ is saturated in~$\Phi$.
\end{theorem}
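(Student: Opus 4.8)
The plan is to treat the two parts separately: the first by an explicit construction of an ordering of $E$ adapted to the subspace $V=\R\Psi$ and the simple system $\Psi^s(D)=\{\beta_1,\dots,\beta_k\}$, and the second by comparing $\Psi$ with the automatically saturated subsystem $V\cap\Phi$ through their simple systems. Throughout I use that a simple system of a root system is a basis of its linear span, and that a finite root system coincides with the orbit of any of its simple systems under the group generated by the corresponding simple reflections.

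For part (i) I would argue as follows. Since $\Psi^s(D)$ is a basis of $V$, choose dual vectors $u_1,\dots,u_k\in V$ with $(u_i,\beta_j)=\delta_{ij}$ and a basis $v_1,\dots,v_m$ of $V^{\perp}$, and define the ordering $E^+$ lexicographically: declare $x>0$ when the first nonzero entry of $((x,v_1),\dots,(x,v_m),(x,u_1),\dots,(x,u_k))$ is positive. Let $C$ be the fundamental chamber of the positive system $\Phi^+:=\Phi\cap E^+$, so that $\Phi^+(C)=\Phi^+$. Three things then have to be checked: that $\Psi\cap E^+=\Psi^+(D)$, that each $\beta_i$ lies in $E^+$, and that each $\beta_i$ is indecomposable in $\Phi^+$, i.e. is a simple root of $C$. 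The first two are immediate from the fact that roots of $\Psi$ have coordinates of one sign in the basis $\{\beta_i\}$ and that the $v$-coordinates of each $\beta_i$ vanish. Granting the third, $\Psi^s(D)\subseteq\Phi^s(C)$, so $C$ lifts $D$.

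The decisive step, and the only place where saturation is used, is the indecomposability of $\beta_i$. Suppose $\beta_i=\gamma+\delta$ with $\gamma,\delta\in\Phi^+$. Because $\beta_i\in V$, the $V^{\perp}$-components of $\gamma$ and $\delta$ are opposite; since the $v$-coordinates have top priority in the ordering, a nonzero $V^{\perp}$-component would make exactly one of $\gamma,\delta$ negative, a contradiction. Hence $\gamma,\delta\in V\cap\Phi$, and here saturation gives $V\cap\Phi=\R\Psi\cap\Phi=\Psi$, so $\gamma,\delta\in\Psi\cap E^+=\Psi^+(D)$. This exhibits $\beta_i$ as a sum of two positive roots of $\Psi$, contradicting $\beta_i\in\Psi^s(D)$, so no such decomposition exists.

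For part (ii) assume $C$ lifts $D$, put $\{\beta_1,\dots,\beta_k\}:=\Psi^s(D)\subseteq\Phi^s(C)$, and consider $\Psi':=\R\Psi\cap\Phi$, which is a saturated root subsystem containing $\Psi$ with $\R\Psi'=\R\Psi=V$. I would show that $\{\beta_i\}$ is a simple system of $\Psi'$: for the positive system $\Psi'\cap\Phi^+(C)$, each $\beta_i$ is positive and indecomposable (a decomposition inside $\Phi^+(C)$ would contradict $\beta_i\in\Phi^s(C)$), and since the $\beta_i$ form a basis of $V=\R\Psi'$ they exhaust the simple system, whose cardinality equals $\dim V$. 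Now $\Psi$ and $\Psi'$ are finite root systems in $V$ sharing the simple system $\{\beta_i\}$; as each equals the orbit of $\{\beta_i\}$ under $\langle\omega_{\beta_1},\dots,\omega_{\beta_k}\rangle$, we conclude $\Psi=\Psi'=\R\Psi\cap\Phi$, i.e. $\Psi$ is saturated. The main obstacle in both parts is the same phenomenon seen from two sides, namely controlling how the extra hyperplanes $L_\alpha$ with $\alpha\in\Phi\setminus\Psi$ interact with the walls $L_{\beta_i}$ of $D$, which is precisely what saturation regulates.
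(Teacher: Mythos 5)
Your overall strategy coincides with the paper's: for (i) you build essentially the same lexicographic ordering of $E$ that gives the $V^\perp$-coordinates priority over the $V$-coordinates, and for (ii) you compare $\Psi$ with $\R\Psi\cap\Phi$ through the common simple system $\Psi^s(D)$. However, both parts lean on the equivalence ``a positive root is simple if and only if it is not a sum of two positive roots,'' and this equivalence fails in the generality the paper works in: $\Phi$ is explicitly \emph{not} assumed crystallographic, and already in the dihedral system $I_2(5)$ every positive root is indecomposable in your sense while only two of the five are simple. Concretely, in (i) ruling out $\beta_i=\gamma+\delta$ with $\gamma,\delta\in\Phi^+$ does not establish $\beta_i\in\Phi^s(C)$; and in (ii) the fact that the $\beta_i$ are indecomposable and are $\dim\R\Psi$ in number does not force them to exhaust the simple system of $\R\Psi\cap\Phi$, because the set of indecomposable positive roots can be strictly larger than the simple system. (In the crystallographic case both steps are fine, which is presumably where the intuition comes from.)

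The gap is repairable with the machinery you already set up, and the repair is exactly what the paper does. In (i), instead of a two-term decomposition, expand $\beta_i=c_1\gamma_1+\cdots+c_n\gamma_n$ with $\gamma_1,\ldots,\gamma_n\in\Phi^s(C)$ and $c_1,\ldots,c_n>0$; your priority argument for the $V^\perp$-coordinates applies verbatim to an arbitrary positive combination (this is the paper's property $(*)$) and forces every $\gamma_j$ into $V\cap E^+$, hence by saturation into $\Psi^+(D)$, after which simplicity of $\beta_i$ in $\Psi^+(D)$ forces $n=1$ and $c_1=1$. In (ii), drop indecomposability altogether: for $\beta\in\R\Psi\cap\Phi^+(C)$ compare the expansion of $\beta$ in the basis $\Phi^s(C)$ (all coefficients nonnegative) with the fact that $\beta$ lies in the span of $\{\beta_1,\ldots,\beta_k\}\subset\Phi^s(C)$; linear independence of $\Phi^s(C)$ kills the coefficients outside $\{\beta_1,\ldots,\beta_k\}$ and exhibits $\beta$ as a nonnegative combination of the $\beta_i$ (and dually for $\Phi^-(C)$), so $\Psi^s(D)$ is a simple system of $\R\Psi\cap\Phi$ directly, and your concluding orbit argument then gives $\Psi=\R\Psi\cap\Phi$.
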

\begin{proof} (i) We denote $V=\R\Psi$ for brevity. Let $V^+$ be an ordering of $V$ such that 
$\Psi^+(D)=\Psi^+(D\cap V)=\Psi\cap V^+$. Let $e_1,\ldots,e_k$ be a basis of the orthogonal compliment 
$V^\perp=\{e\in E\suchthat\forall v\in V(e,v)=0\}$.
We define $E^+$ as the set of all sums $c_1e_1+\cdots+c_ke_k+v$,
where $c_1,\ldots,c_k\in\R$ and either $c_1=\cdots=c_k=0$ and $v\in V^+$
or $v\in V$ and there is some $i=1,\ldots,k$ such that $c_1=\cdots=c_{i-1}=0$ and $c_i>0$.
It is clear that $E^+$ is an ordering of $E$ and $V^+=E^+\cap V$.
Moreover, the following property holds:

\begin{itemize}
\item[($*$)] A positive linear combination of vectors of $E^+$ belongs to $V$ 
if and only if each vector belongs to $V^+$.
\end{itemize}

Let $C$ be the fundamental chamber relative to the positive system $\Phi\cap E^+$.
Obviously $\Psi^+(D)\subset\Phi^+(C)=\Phi\cap E^+$. We claim also that $\Psi^s(D)\subset\Phi^s(C)$. 
Indeed let $\alpha\in\Psi^s(D)$. Then $\alpha\in\Phi^+(C)$ and we have a decomposition
$\alpha=c_1\beta_1+\cdots+c_n\beta_n$, where $\beta_1,\ldots,\beta_n\in\Phi^s(C)$ 
and $c_1,\ldots,c_n>0$. By~($*$), we get $\beta_1,\ldots,\beta_n\in V^+$.
As $\Psi$ is saturated in $\Phi$, we also get $\beta_1,\ldots,\beta_n\in\Psi$, whence $\beta_1,\ldots,\beta_n\in\Psi^+(D)$.
We get a contradiction unless $n=1$ and $c_1=1$, which proves the claim. 

(ii) 
Let $\Phi^s(C)=\{\alpha_1,\ldots,\alpha_n\}$, where $\alpha_1,\ldots,\alpha_k\in\Psi^s(D)$, 
$\alpha_{k+1},\ldots,\alpha_n\notin\Psi^s(D)$ and all roots $\alpha_1,\ldots,\alpha_n$ are pairwise distinct.

Let $\beta\in\R\Psi\cap\Phi^+(C)$. 
There are two $\R$-linear decompositions
$\beta=c_1\alpha_1+\cdots+c_k\alpha_k$ and $\beta=d_1\alpha_1+\cdots+d_n\alpha_n$ with $d_1,\ldots,d_n>0$.
Hence $d_{k+1}=\cdots=d_n=0$ and $\beta$ is a positive linear combination of the roots of $\Psi^s(D)$.
Similarly, if $\beta\in\R\Psi\cap\Phi^-(C)$, then $\beta$ is a negative combination of the roots of $\Psi^s(D)$.
Thus $\Psi^s(D)$ is a simple system for both root systems $\R\Psi\cap\Phi$ and $\Psi$.
As a simple system uniquely determines its root system, we get $\R\Psi\cap\Phi=\Psi$.
\end{proof}

\begin{example}\label{example:1}
\rm Let $\Phi$ be the root system of type $A_2$ with the simple roots $\alpha$ and $\beta$.
We consider the root subsystem $\Psi=\{-\alpha,\alpha\}$.
For any root $\gamma\in\Phi$ (respectively $\gamma\in\Psi$), we denote by $C(\gamma)$ (respectively $D(\gamma)$) 
the chamber of $\Ch_\Phi$ (respectively $\Ch_\Psi$) containing $\gamma$. 
The only chambers of $\Ch_\Phi$ lifting $D(\alpha)$ are $C(\alpha+\beta)$ and $C(-\beta)$.
On the other hand,
$\Psi^+(D(\alpha))=\{\alpha\}\subset\{-\beta,\alpha,\alpha+\beta\}=\Phi^+(C(\alpha))$ and 
$\Psi^s(D(\alpha))=\{\alpha\}\not\subset\{-\beta,\alpha+\beta\}=\Phi^s(C(\alpha))$.
\end{example}

\begin{example}\label{example:2}
\rm Let $\Phi$ be the root system of type $B_2$ and $\Psi$ be its subsystem consisting of the short roots (of type $A_1\times A_1$).
One root of any simple system 
of $\Phi$ is long and both roots of any simple system 
of $\Psi$ 
are short. Therefore, no chamber of $\Ch_\Phi$ lifts a chamber of $\Ch_\Psi$. We have $\R\Psi\cap\Phi\ne\Psi$,
that is, $\Psi$ is not saturated in $\Phi$ 
in accordance with Theorem~\ref{theorem:1}.
\end{example}

We will need the following additional criterion.

\begin{lemma}\label{lemma:2}
Let $C,D\in\Ch_\Phi$ and $\Psi$ be a root subsystem of $\Phi$.
The chamber $C$ lifts $D_\Psi$ if and only if there exists
$X\subset\Phi^s(C)\cap\Phi^+(D)$ such that $\Psi=\R X\cap\Phi$.
If these conditions hold, then we can take $X=\Psi^s(D_\Psi)$.
\end{lemma}
\begin{proof}
Suppose that $C$ lifts $D_\Psi$. Let us take $X=\Psi^s(D_\Psi)$. 
By our assumption, we have $X\subset\Phi^s(C)$. 
On the other hand, $X\subset\Psi^+(D_\Psi)$ and $D\subset D_\Psi$.
Hence for any $\alpha\in X$ and $e\in D$, we get $(\alpha,e)>0$. Thus $\alpha\in\Phi^+(D)$.
As $\alpha$ was chosen arbitrarily in $X$, we get $X\subset\Phi^+(D)$.
Moreover, by Theorem~\ref{theorem:1}(ii), we get
$\R X\cap\Phi=\R\Psi\cap\Phi=\Psi$.

Conversely, suppose that a set $X$ as in the formulation of the lemma exists.
%
We claim that $X$ is a simple system for $\Psi$. Indeed let $\Phi^s(C)=\{\alpha_1,\ldots,\alpha_n\}$, 
where $\alpha_1,\ldots,\alpha_k\in X$,
$\alpha_{k+1},\ldots,\alpha_n\notin X$ and all roots $\alpha_1,\ldots,\alpha_n$ are pairwise distinct.
Let $\beta\in\Psi\cap\Phi^+(C)$. There are two $\R$-linear decompositions
$\beta=c_1\alpha_1+\cdots+c_k\alpha_k$ and $\beta=d_1\alpha_1+\cdots+d_n\alpha_n$ with $d_1,\ldots,d_n>0$.
Hence $d_{k+1}=\cdots=d_n=0$ and we have the positive linear combination $\beta=d_1\alpha_1+\cdots+d_k\alpha_k$.
We get a similar negative linear combination for $\beta\in\Psi\cap\Phi^-(C)$. Finally note that the set $X$
is linearly independent as a subset of the simple system $\Phi^s(C)$.

Let $\alpha\in X$, $e\in D$ and $v\in D_\Psi$. As $\alpha\in\Psi$, the hyperplane $L_\alpha$ does not separate
the points $e$ and $v$. Hence and from $(e,\alpha)>0$, we get $(v,\alpha)>0$.
Thus we have proved that $\alpha\in\Psi^+(D_\Psi)$. As $\alpha$ was chosen arbitrarily in $X$,
we get $X\subset\Psi^+(D_\Psi)$. Hence and from the previous paragraph we get $\Psi^s(D_\Psi)=X\subset\Phi^s(C)$.
\end{proof}

\section{Galleries}

\subsection{Definitions} For any integers $i\le j$, we denote $[i,j]=\{i,i+1,\ldots,j\}$, $[i,j)=\{i,i+1,\ldots,j-1\}$, etc. 
A {\it gallery}\footnote{It would be more correct to use the term ``labelled gallery''. However we omit the adjective,
as we do not consider here unlabelled galleries.  On the other hand, we prefer to keep the adjective, 
when we consider combinatorial galleries in Section~\ref{Combinatorial galleries}.} in $\Phi$ is a sequence
\begin{equation}\label{eq:i}
\Gamma=(C_0,L_{\alpha_1},C_1,L_{\alpha_2},\ldots,C_{n-1},L_{\alpha_n},C_n),
\end{equation}
where $C_0,C_1,\ldots,C_n\in\Ch_\Phi$ and $\alpha_1,\ldots,\alpha_n\in\Phi$ are such that 
the chambers $C_{i-1}$ and $C_i$ are connected through $L_{\alpha_i}$ for any $i=1,\ldots,n$.
Clearly, either $C_i=\omega_{\alpha_i}C_{i-1}$ or $C_i=C_{i-1}$.
The number $n$ of walls in $\Gamma$ is called the {\it length} of $\Gamma$ and is denoted by $|\Gamma|$.
The chambers $C_0$ and $C_n$ are called the {\it initial} and the {\it final} chambers of $\Gamma$ respectively
and the chamber $C_i$ is called the {\it $i$th} chamber of $\Gamma$. We also say that $\Gamma$ {\it begins} at $C_0$
and {\it ends} at $C_n$.

For $\Gamma$ given by~(\ref{eq:i}) and
integers $i$ and $j$ such that $0\le i\le j\le n$, we consider the
following gallery:
$$
\Gamma_{[i,j]}=(C_i,L_{\alpha_{i+1}},C_{i+1},L_{\alpha_{i+2}},\ldots,C_{j-1},L_{\alpha_j},C_j).
$$  
Moreover, if the final chamber of a gallery $\Gamma$ coincides with the initial chamber of a gallery $\Delta$, 
then they can by glued
$$
\Gamma\cup\Delta=(C_0,L_{\alpha_1},\ldots,C_{n-1},L_{\alpha_n},C_n,L_{\beta_1},\ldots,D_{m-1},L_{\beta_m},D_m),
$$
where $\Gamma$ is denoted as in~(\ref{eq:i}) and $\Delta=(C_n,L_{\beta_1},D_1,L_{\beta_2},\ldots,D_{m-1},L_{\beta_m},D_m\}$.

We will pay special attention to the {\it sequence of walls} $(L_{\alpha_1},L_{\alpha_2}\ldots,L_{\alpha_n})$ 
of a gallery $\Gamma$ denoted as in~(\ref{eq:i}). 
Considering its subsequences, we get the following definition that is very
close to the definition of a $(p,w)$-pair from~\cite{fcat} (see Section 3.2 therein or Section~\ref{(p,w)-pairs} in this paper).

\begin{definition}\label{def:1} Let $\Gamma=(C_0,L_{\alpha_1},\ldots,C_{n-1},L_{\alpha_n},C_n)$ and
$\Delta=(D_0,L_{\beta_1},\ldots,D_{m-1},L_{\beta_m},D_m)$
be galleries and $p:[1,m]\to[1,n]$ be an increasing embedding.
We call $(\Delta,\Gamma)$ a $p$-pair iff $L_{\alpha_{p(i)}}=L_{\beta_i}$
for any $i=1,\ldots,m$.
\end{definition}  
\noindent
For any pair of galleries $(\Gamma,\Delta)$ as in this definition, we define its {\it sign} $\epsilon=(\epsilon_1,\ldots,\epsilon_m)$ by 
$$
\epsilon_i=\left\{\!\!
\begin{array}{rl}
1&\text{ if }L_{\beta_i}\text{does not separate }D_i\text{ and }C_{p(i)};\\
-1&\text{otherwise}.
\end{array}
\right.
$$
Similarly, we define the {\it cosign} $\mu=(\mu_1,\ldots,\mu_m)$ by
$$
\mu_i=\left\{\!\!
\begin{array}{rl}
1&\text{ if }L_{\beta_i}\text{does not separate }C_{i-1}\text{ and }D_{p(i)-1};\\
-1&\text{otherwise}.
\end{array}
\right.
$$
We say that the sign or cosign is {\it positive} if all its entries are so.

\subsection{Projection}\label{Projection}
Let $\Psi$ be a root subsystem of $\Phi$. For a gallery $\Gamma$ in $\Phi$ denoted as in~(\ref{eq:i}), 
we denote
$$
I_\Psi(\Gamma)=\{i\in[1,n]\suchthat\alpha_i\in\Psi\}=\{i_1<i_2<\cdots<i_m\}.
$$
We define
$$
\Gamma_\Psi=((C_0)_\Psi,L_{\alpha_{i_1}},(C_{i_1})_\Psi,L_{\alpha_{i_2}},(C_{i_2})_\Psi,\ldots,(C_{i_{m-1}})_\Psi,L_{\alpha_{i_m}},(C_{i_m})_\Psi).
$$
Note that for any $t=1,\ldots,m+1$ and $j\in[i_{t-1},i_t)$, we get
\begin{equation}\label{eq:iii}
(C_{i_{t-1}})_\Psi=(C_j)_\Psi.
\end{equation}
Here and in what follows, we assume $i_0=0$.
From Proposition~\ref{proposition:1}, we obtain the following result. 
\begin{proposition}\label{proposition:2}
For any gallery $\Gamma$ in $\Phi$ and any root subsystem $\Psi$ of $\Phi$, 
the sequence $\Gamma_\Psi$ is a gallery in $\Psi$.
\end{proposition}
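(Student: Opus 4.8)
The plan is to verify that $\Gamma_\Psi$ satisfies the definition of a gallery in $\Psi$, namely that consecutive chambers appearing in it are genuine chambers of $\Ch_\Psi$, that the walls $L_{\alpha_{i_t}}$ are perpendicular to roots of $\Psi$, and most importantly that each consecutive pair of chambers is connected through the intervening wall. The walls pose no difficulty: by the very definition of $I_\Psi(\Gamma)$, each index $i_t$ satisfies $\alpha_{i_t}\in\Psi$, so every $L_{\alpha_{i_t}}$ is a wall of $\Psi$. Likewise each $(C_{i_t})_\Psi$ is by construction a chamber of $\Ch_\Psi$. So the entire content is the connectedness condition for each consecutive pair.

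First I would fix $t\in\{1,\ldots,m\}$ and examine the two consecutive chambers $(C_{i_{t-1}})_\Psi$ and $(C_{i_t})_\Psi$ together with the wall $L_{\alpha_{i_t}}$ between them. The key observation is that in the original gallery $\Gamma$, the chambers $C_{i_t-1}$ and $C_{i_t}$ are connected through $L_{\alpha_{i_t}}$, and $\alpha_{i_t}\in\Psi$. By Proposition~\ref{proposition:1}(ii), applied with $\alpha=\alpha_{i_t}$, the projected chambers $(C_{i_t-1})_\Psi$ and $(C_{i_t})_\Psi$ are connected through $L_{\alpha_{i_t}}$. It therefore remains only to identify $(C_{i_t-1})_\Psi$ with $(C_{i_{t-1}})_\Psi$, which is exactly what the stability relation~(\ref{eq:iii}) provides: taking $j=i_t-1\in[i_{t-1},i_t)$, we get $(C_{i_t-1})_\Psi=(C_{i_{t-1}})_\Psi$.

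To justify~(\ref{eq:iii}) itself (which the excerpt states but whose proof drives the argument), I would note that for any $j$ with $i_{t-1}\le j<i_t$, none of the intermediate indices $i_{t-1}+1,\ldots,j$ lie in $I_\Psi(\Gamma)$, so each of $\alpha_{i_{t-1}+1},\ldots,\alpha_j$ lies in $\Phi\setminus\Psi$. For each such step, the chambers $C_{l-1}$ and $C_l$ are connected through $L_{\alpha_l}$ with $\alpha_l\in\Phi\setminus\Psi$, so Proposition~\ref{proposition:1}(i) gives $(C_{l-1})_\Psi=(C_l)_\Psi$. Chaining these equalities from $l=i_{t-1}+1$ up to $l=j$ yields $(C_{i_{t-1}})_\Psi=(C_j)_\Psi$.

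Assembling these pieces shows that for every $t$ the pair $\bigl((C_{i_{t-1}})_\Psi,(C_{i_t})_\Psi\bigr)$ is connected through $L_{\alpha_{i_t}}$, which is precisely the defining condition for $\Gamma_\Psi$ to be a gallery in $\Psi$. I do not anticipate a serious obstacle here, since both cases of Proposition~\ref{proposition:1} are handed to us and do all the real work; the only point requiring mild care is the bookkeeping of indices, in particular the boundary conventions $i_0=0$ and the telescoping of the ``inactive'' steps in $\Phi\setminus\Psi$ via~(\ref{eq:iii}). The essential idea is simply that passing through a wall outside $\Psi$ does not change the enclosing $\Psi$-chamber, whereas passing through a wall inside $\Psi$ moves to an adjacent $\Psi$-chamber across that same wall.
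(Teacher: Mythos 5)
Your proof is correct and follows exactly the route the paper intends: the paper derives Proposition~\ref{proposition:2} directly from Proposition~\ref{proposition:1}, with part (i) giving the telescoping identity~(\ref{eq:iii}) across the walls outside $\Psi$ and part (ii) giving connectedness through each wall $L_{\alpha_{i_t}}$ with $\alpha_{i_t}\in\Psi$. You have simply written out the details the paper leaves implicit.
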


\subsection{Lifting}\label{Lifting+} We formulate the following definition inspired by Section~\ref{Lifting}. 
\begin{definition} A gallery $\Gamma$ in $\Phi$ lifts a gallery $\Delta$ in $\Psi$
if the sequences of walls of these galleries coincide and the $i$th chamber of $\Gamma$ lifts the i$th$ chamber 
of $\Delta$ for any admissible $i$. 
\end{definition}

\noindent
We are going to prove the following existence and uniqueness result.

\begin{theorem}\label{theorem:2} 
For any gallery $\Delta$ in $\Psi$ and a chamber $C\in\Ch_\Phi$ lifting the $i$th chamber of $\Delta$,
there exists a unique gallery in $\Phi$ lifting $\Delta$ whose $i$th chamber is $C$.
\end{theorem}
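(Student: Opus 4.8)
The plan is to reduce the whole statement to a one-step result and then propagate it outward from the prescribed position $i$. Write $\Delta=(D_0,L_{\beta_1},D_1,\ldots,D_{m-1},L_{\beta_m},D_m)$. Since a gallery lifting $\Delta$ is required to have the \emph{same} sequence of walls $L_{\beta_1},\ldots,L_{\beta_m}$, such a gallery is completely determined by its chambers $E_0,\ldots,E_m\in\Ch_\Phi$, and the only constraints are that $E_{j-1}$ and $E_j$ be connected through $L_{\beta_j}$ (so that $E_j\in\{E_{j-1},\omega_{\beta_j}E_{j-1}\}$) and that each $E_j$ lift $D_j$. Thus it suffices to show that, knowing one chamber that lifts one member of a connected pair, the neighbouring chamber lifting the other member exists and is uniquely determined.

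I would isolate this as the following one-step claim: \emph{if $E\in\Ch_\Phi$ lifts $D\in\Ch_\Psi$ and $D,D'\in\Ch_\Psi$ are connected through $L_\beta$ with $\beta\in\Psi$, then there is exactly one chamber $E'\in\Ch_\Phi$ connected to $E$ through $L_\beta$ that lifts $D'$.} To prove it, first observe that being connected through $L_\beta$ forces $D$ to be attached to $L_\beta$, so $\beta$ is, up to sign, a simple root of $D$; replacing $\beta$ by $-\beta$ if necessary we may assume $\beta\in\Psi^s(D)$. As $E$ lifts $D$ we have $\Psi^s(D)\subset\Phi^s(E)$, hence $\beta\in\Phi^s(E)$; therefore $E$ is attached to $L_\beta$ and the only chambers connected to $E$ through $L_\beta$ are $E$ and $\omega_\beta E$. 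Now distinguish $D'=D$ and $D'=\omega_\beta D$. Using the $W$-equivariance~(\ref{eq:10}) of $\Phi^s$ and the analogous equivariance of $\Psi^s$, one has $\Phi^s(\omega_\beta E)=\omega_\beta\Phi^s(E)$, while $\Psi^s(D')=\Psi^s(D)$ in the first case and $\Psi^s(D')=\omega_\beta\Psi^s(D)$ in the second. A direct comparison then shows that exactly one of $E,\omega_\beta E$ lifts $D'$: if $D'=D$ it is $E$ (and $\omega_\beta E$ fails, since $\beta\in\Psi^s(D')$ but $\beta\notin\Phi^s(\omega_\beta E)$, because $-\beta\in\Phi^s(E)$ would be needed and no simple system contains both $\beta$ and $-\beta$); if $D'=\omega_\beta D$ it is $\omega_\beta E$ (because $\omega_\beta\Psi^s(D)\subset\omega_\beta\Phi^s(E)$, whereas $E$ fails, as $-\beta\in\Psi^s(D')$ would have to lie in $\Phi^s(E)$). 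This gives existence and uniqueness of the single step.

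With the one-step claim in hand I would build the lifted gallery by induction in both directions starting from $E_i=C$: applying the claim with $(D,D',E)=(D_{j-1},D_j,E_{j-1})$ produces $E_j$ for $j=i+1,\ldots,m$, and applying it with $(D,D',E)=(D_j,D_{j-1},E_j)$ (legitimate because ``connected through $L_{\beta_j}$'' is symmetric in the two chambers) produces $E_{j-1}$ for $j=i,\ldots,1$. The resulting sequence $\Gamma=(E_0,L_{\beta_1},\ldots,L_{\beta_m},E_m)$ has consecutive chambers connected through the prescribed walls, so it is a gallery in $\Phi$; it has the same walls as $\Delta$, each $E_j$ lifts $D_j$, and $E_i=C$, so $\Gamma$ lifts $\Delta$. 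Uniqueness is immediate: any lift of $\Delta$ with $i$th chamber $C$ has its chambers forced one at a time by the uniqueness in the one-step claim, hence coincides with $\Gamma$.

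The main obstacle is really the one-step claim, and within it the point deserving care is the identification of $\beta$ (up to sign) with a simple root of $D$, i.e.\ the fact that a chamber is attached only to the walls supported by its simple roots; everything else is bookkeeping with the $W$-equivariance of $\Phi^s$ and $\Psi^s$ and the elementary observation that no simple system contains a root together with its negative. I would make sure this facet/simple-root correspondence is available in the present generality (finite, possibly non-crystallographic and non-spanning $\Phi$), passing if needed to the subspace $\R\Psi$, on which $\Psi$ is a genuine spanning root system.
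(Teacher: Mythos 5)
Your proposal is correct and follows essentially the same route as the paper: both arguments propagate outward from position $i$ one wall at a time, using the facts that attachment to $L_\beta$ means $\pm\beta$ is simple, that $\Phi^s$ and $\Psi^s$ are $W$-equivariant, and that no simple system contains both a root and its negative. Your packaging of the induction step as a single existence-and-uniqueness claim is a clean reorganization of the paper's separate existence and uniqueness inductions, but the underlying computations are identical.
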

\begin{proof}
{\it Existence.} Let us denote
\begin{equation}\label{eq:ii}
\Delta=(D_0,L_{\beta_1},D_1,\ldots,L_{\beta_m},D_m)
\end{equation}
We set $C_i=C$, define the chambers $C_1,\ldots,C_{i-1}$ recursively (from right to left) by
$$
C_{j-1}=\left\{\!
\begin{array}{ll}
C_j&\text{if }D_{j-1}=D_j;\\
\omega_{\beta_j}C_j&\text{otherwise}.
\end{array}
\right.
$$
and the chambers $C_{i+1},\ldots,C_m$ recursively (from left to right) by
$$
C_{j+1}=\left\{\!
\begin{array}{ll}
C_j&\text{if }D_{j+1}=D_j;\\
\omega_{\beta_{j+1}}C_j&\text{otherwise}.
\end{array}
\right..
$$
Then we define $\Delta^C_i=(C_0,L_{\beta_1},C_1,\ldots,L_{\beta_m},C_m)$ and claim that it is the required gallery.

First we consider the case $i=0$. We will apply induction on $m$. 
The case $m=0$ is obvious. 
Suppose that $m>0$ and that the claim is true for the
truncated gallery $\widetilde\Delta=(D_0,L_{\beta_1},D_1,\ldots,L_{\beta_{m-1}},D_{m-1})$.
As $D_{m-1}$ is attached to $L_{\beta_m}$, we get
$\beta_m\in\Psi^s(D_{m-1})\subset\Phi^s(C_{m-1})$. This proves that $C_{m-1}$
is attached to $L_{\beta_m}$. If $D_m=D_{m-1}$, then by definition $C_m=C_{m-1}$
and we are done. If $D_m\ne D_{m-1}$, then $D_m=\omega_{\beta_m}D_{m-1}$
and by definition $C_m=\omega_{\beta_m}C_{m-1}$.
Hence $\Psi^s(D_m)=\omega_{\beta_m}\Psi^s(D_{m-1})\subset\omega_{\beta_m}\Phi^s(C_{m-1})=\Phi^s(C_m)$
by~(\ref{eq:10}).

The case $i=m$ can be treated similarly. In the case $0<i<m$, we get $\Delta^C_i=(\Delta_{[1,i]})^C_i\cup(\Delta_{[i,m]})_0^C$
and the result follows from the two cases considered above.

{\it Uniqueness.} Let $\Gamma=(C'_1,L_{\beta_1},\ldots,C'_{m-1},L_{\beta_m},C'_m)$ be a gallery in $\Phi$ that lifts $\Delta$ whose $i$th chamber is $C$.
We are going to prove that $\Gamma=\Delta_i^C$. Here we use the notation of the previous part, that is, 
denote $\Delta$ as in~(\ref{eq:ii}) and compute recursively the chambers $C_1,\ldots,C_m$.
Suppose that we have already proved that $C_j=C'_j$ for some $j=i,\ldots,m-1$.
 
First consider the case $D_{j+1}=D_j$. Suppose that $C'_{j+1}\ne C_{j+1}=C_j$. Then $C'_{j+1}=\omega_{\beta_{j+1}}C_j$.
Changing the sign of $\beta_{j+1}$, if necessary, we can assume that $\beta_{j+1}\in\Psi^s(D_j)$.
As $C'_{j+1}$ lifts $D_{j+1}$ and $C_j$ lifts $D_j$, we get $\beta_{j+1}\in\Phi^s(C_j)\cap\Phi^s(C'_{j+1})$. 
On the other hand, $-\beta_{j+1}=\omega_{\beta_{j+1}}\beta_{j+1}\in\omega_{\beta_{j+1}}\Phi^s(C_j)=\Phi^s(C'_{j+1})$,
which is a contradiction. Thus we have proved that $C'_{j+1}=C_{j+1}$.

Finally, we consider the case $D_{j+1}\ne D_j$. In this case $D_{j+1}=\omega_{\beta_{j+1}}D_j$. 
Suppose that $C'_{j+1}\ne C_{j+1}=\omega_{\beta_{j+1}}C_j$. In that case $C'_{j+1}=C_j$.
Changing the sign of $\beta_{j+1}$, if necessary, we can assume that $\beta_{j+1}\in\Psi^s(D_j)\subset\Phi^s(C_j)$.
Then we get $-\beta_{j+1}=\omega_{\beta_{j+1}}\beta_{j+1}\in\omega_{\beta_{j+1}}\Psi^s(D_j)=\Psi^s(D_{j+1})\subset\Psi^s(C'_{j+1})$.
Therefore, we again have a contradiction $\beta_{j+1},-\beta_{j+1}\in \Phi^s(C_j)$. 
Thus we have proved that $C'_{j+1}=C_{j+1}$. 

Arguing inductively with the help of the previous two paragraphs, we can prove that $C'_j=C_j$ for any $j=i,\ldots,m$.
The prove of this equality for $j=1,\ldots,i$ is similar.
\end{proof}

\begin{remark} \rm
We have proved that any lifting of $\Delta$ has the form $\Delta_i^C$. 
We will use this notation throughout the paper. Moreover, we abbreviate ${}^C\!\Delta=\Delta^C_0$ and $\Delta^C=\Delta^C_{|C|}$. 
\end{remark}

\begin{remark} \rm
It follows from Lemma~\ref{lemma:1} that $\Gamma_\Psi=\Delta$ if $\Gamma$ lifts $\Delta$.
\end{remark}

\subsection{Constructing $p$-pairs} Let $\Gamma$ be a gallery in $\Phi$.
Choosing a saturated root subsystem $\Psi$ of $\Phi$, we get the gallery
$\Gamma_\Psi$. Now choosing a chamber $C\in\Ch_\Phi$ that lifts the $i$th chamber
of $\Gamma_\Psi$, we get the gallery $(\Gamma_\Psi)^C_i$.

\begin{theorem}\label{theorem:3} For any gallery $\Gamma$ in $\Phi$
and a gallery $\Delta$ in $\Phi$ lifting $\Gamma_\Psi$, the pair $(\Delta,\Gamma)$ is a $p$-pair of positive sign and cosign,
where $p:[1,|\Gamma_\Psi|]\to[1,|\Gamma|]$ is the increasing embedding with image $I_\Psi(\Gamma)$.
\end{theorem}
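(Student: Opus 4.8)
The plan is to check in turn the three assertions hidden in the statement: that $(\Delta,\Gamma)$ is a $p$-pair, that its sign $\epsilon$ is positive, and that its cosign $\mu$ is positive. Write $\Delta=(D_0,L_{\beta_1},D_1,\ldots,L_{\beta_m},D_m)$ and recall $I_\Psi(\Gamma)=\{i_1<\cdots<i_m\}$, so that $p(t)=i_t$ and, with the convention $i_0=0$, the $t$th chamber of $\Gamma_\Psi$ is $(C_{i_t})_\Psi$ for $t=0,\ldots,m$. Since $\Delta$ lifts $\Gamma_\Psi$, by definition the two galleries share the same sequence of walls, that is $L_{\beta_t}=L_{\alpha_{i_t}}=L_{\alpha_{p(t)}}$ for every $t$; this is precisely the defining condition of Definition~\ref{def:1}, so the $p$-pair assertion is immediate and the whole content lies in the signs.

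I would base both sign computations on one elementary remark: a wall $L_\alpha$ with $\alpha\in\Psi$ cannot separate two points lying in a single chamber of $\Ch_\Psi$, since such a chamber is a connected subset of $E\setminus\bigcup_{\gamma\in\Psi}L_\gamma$ and therefore lies entirely on one side of $L_\alpha$. Consequently, to obtain $\epsilon_t=1$ and $\mu_t=1$ it suffices to prove that the two chambers compared across the $\Psi$-wall $L_{\alpha_{i_t}}$ project into the same chamber of $\Ch_\Psi$.

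For the sign, $\epsilon_t=1$ asks that $L_{\alpha_{i_t}}$ not separate $D_t$ and $C_{p(t)}=C_{i_t}$. As $\Delta$ lifts $\Gamma_\Psi$, the chamber $D_t$ lifts the $t$th chamber $(C_{i_t})_\Psi$ of $\Gamma_\Psi$, so Lemma~\ref{lemma:1} gives $(D_t)_\Psi=(C_{i_t})_\Psi$. Thus $D_t$ and $C_{i_t}$ lie in a common chamber of $\Ch_\Psi$, and the remark yields $\epsilon_t=1$.

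For the cosign, $\mu_t=1$ asks that $L_{\alpha_{i_t}}$ not separate $D_{t-1}$ and $C_{p(t)-1}=C_{i_t-1}$. Here $D_{t-1}$ lifts $(C_{i_{t-1}})_\Psi$, so Lemma~\ref{lemma:1} gives $(D_{t-1})_\Psi=(C_{i_{t-1}})_\Psi$; on the other side, since $i_{t-1}\le i_t-1<i_t$, the projection-invariance identity (\ref{eq:iii}) applied with $j=i_t-1$ gives $(C_{i_t-1})_\Psi=(C_{i_{t-1}})_\Psi$. Hence $D_{t-1}$ and $C_{i_t-1}$ again lie in a common chamber of $\Ch_\Psi$ and $\mu_t=1$. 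The one place needing care is exactly this last point: one must recognize that the predecessor chamber $C_{i_t-1}$ of $\Gamma$ has not yet crossed the $\Psi$-wall $L_{\alpha_{i_t}}$ and so still projects to the previously sampled $\Psi$-chamber, which is what (\ref{eq:iii}) encodes; everything else is bookkeeping on top of the definition of lifting and Lemma~\ref{lemma:1}.
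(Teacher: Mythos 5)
Your proposal is correct and follows essentially the same route as the paper: the $p$-pair claim is read off from the coincidence of wall sequences, and both positivity claims reduce, via Lemma~\ref{lemma:1} and identity~(\ref{eq:iii}), to the observation that a wall $L_\alpha$ with $\alpha\in\Psi$ cannot separate two chambers contained in one chamber of $\Ch_\Psi$. The only difference is cosmetic: the paper writes out the cosign case and declares the sign case similar, whereas you spell out both.
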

\begin{proof}
Let $\Gamma=(C'_0,L_{\alpha_1},C'_1,L_{\alpha_2},\ldots,C'_{n-1},L_{\alpha_n},C'_n)$ 
and $I_\Psi(\Gamma)=\{i_1<\cdots<i_m\}$. We have $p(t)=i_t$. 
We $\Delta=(C_0,L_{\alpha_{i_1}},C_1,\ldots,L_{\alpha_{i_m}},C_m)$
for some chambers $C_0,\ldots,C_m\in\Ch_\Phi$. Hence we derive the claim about being a $p$-pair.

Let us prove the positivity claim. Let $\mu=(\mu_1,\ldots,\mu_m)$ be the cosign of our pair. 
As usual, we set $i_0=0$. Let us consider any $t=1,\ldots,m$. The chambers $(C'_{i_{t-1}})_\Psi$ and $C_{t-1}$
are the $t-1$th chambers of the galleries $\Gamma_\Psi$ and $\Delta$ respectively.
By definition, the latter chamber lifts the former one. 
Hence by Lemma~\ref{lemma:1} and~(\ref{eq:iii}) (with primes added), 
we get $(C_{t-1})_\Psi=(C'_{i_{t-1}})_\Psi=(C'_{i_t-1})_\Psi$.
Therefore $L_{\alpha_{i_t}}$ does not separate $C_{t-1}$ and $C'_{i_t-1}$, whence $\mu_t=1$. 
Thus we have proved that the cosign is positive. The positivity of the sign is proved similarly.
%
%
\end{proof}

\subsection{Gluing}\label{Gluing} Let $\Gamma$ be a gallery denoted as in~(\ref{eq:i}).
Suppose additionally that we are given integers $i_0,\ldots,i_k$ such that
$0=i_0\le i_1\le\cdots\le i_k=n$. We choose a number $q=0,\ldots,k-1$ and an arbitrary
saturated root subsystem $\Psi_q$ of $\Phi$. Then we take any gallery $\Delta_q$ lifting $(\Gamma_{[i_q,i_{q+1}]})_{\Psi_q}$. 
If $q<k-1$, then we choose any nonempty subset $X\subset\Phi^s(D)\cap\Phi^+(C_{i_{q+1}})$, 
where $D$ is the final chamber of $\Delta_q$, and consider the saturated root subsystem $\Psi_{q+1}=\R X\cap\Phi$.
Lemma~\ref{lemma:2} guarantees that $D$ lifts $(C_{i_{q+1}})_{\Psi_{q+1}}$. 
Then we define $\Delta_{q+1}={}^D(\Gamma_{i_{q+1},i_{q+2}})_{\Psi_{q+1}}$.
Similarly, if $0<q$, then considering the initial chamber $F$ of $\Delta_q$, 
we can choose a root subsystem $\Psi_{q-1}=\R Y\cap\Phi$ for a nonempty $Y\subset\Phi^s(F)\cap\Phi^+(C_{i_q})$ 
and define the gallery $\Delta_{q-1}=((\Gamma_{i_{q-1},i_q})_{\Psi_{q-1}})^F$. 
Continuing to the left and to the right, we will construct the galleries 
$\Delta_0,\ldots,\Delta_{q-1},\Delta_{q+1},\ldots,\Delta_{k-1}$.
It remains to glue them all to get the gallery $\Delta_0\cup\Delta_1\cup\cdots\cup\Delta_{k-1}$.

\begin{remark} \rm
In the above construction, we could always take $X\subset\Psi_q^s((C_{i_{q+1}})_{\Psi_q})$.
If $X=\Psi_q^s((C_{i_{q+1}})_{\Psi_q})$, then we get $\Psi_{q+1}=\Psi_q$.
The similar argument applies to $Y$. If we always make the choices 
with the equalities just mentioned, then $\Delta$ will lift the gallery $\Gamma_{\Psi_q}$.
\end{remark}

By Theorem~\ref{theorem:3}, each pair $(\Delta_l,\Gamma_{[i_l,i_{l+1}]})$ is a $p_l$-pair for the corresponding 
embedding $p_l:[1,|\Delta_l|]\to[1,|\Gamma_{[i_l,i_{l+1}]}|]$. We can glue the embeddings $p_0,\ldots,p_{k-1}$ 
to the embedding $$
p:[1,|\Delta_0|+|\Delta_1|+\cdots+|\Delta_{k-1}|]\to[1,|\Gamma|]
$$ by 
$p(j)=p_l(j-|\Delta_0|-\cdots-|\Delta_{l-1}|)+i_l$ for $|\Delta_0|+\cdots+|\Delta_{l-1}|<j\le|\Delta_0|+\cdots+|\Delta_l|$,
where $i_0=0$.

\begin{proposition}
The pair $(\Delta_0\cup\Delta_1\cup\cdots\cup\Delta_{k-1},\Gamma)$ is a $p$-pair of positive sign and cosign. 
\end{proposition}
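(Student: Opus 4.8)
The plan is to reduce the proposition to the piecewise statements already furnished by Theorem~\ref{theorem:3}, the only new ingredient being the bookkeeping built into the definition of $p$. First I would record that the galleries $\Delta_0,\ldots,\Delta_{k-1}$ are genuinely gluable. By construction $\Delta_{l+1}$ was defined as ${}^D(\Gamma_{[i_{l+1},i_{l+2}]})_{\Psi_{l+1}}$ with $D$ the final chamber of $\Delta_l$, so its initial chamber is the final chamber of $\Delta_l$; symmetrically $\Delta_{l-1}$ was defined as $((\Gamma_{[i_{l-1},i_l]})_{\Psi_{l-1}})^F$ with $F$ the initial chamber of $\Delta_l$, so its final chamber is the initial chamber of $\Delta_l$. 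Hence consecutive pieces share the required chamber and $\Delta:=\Delta_0\cup\cdots\cup\Delta_{k-1}$ is a well-defined gallery; a degenerate piece with $i_l=i_{l+1}$ has length $0$ and merely passes the shared chamber through.

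Next I would set up an index dictionary. For a wall index $j$ of $\Delta$ with $|\Delta_0|+\cdots+|\Delta_{l-1}|<j\le|\Delta_0|+\cdots+|\Delta_l|$, put $j'=j-(|\Delta_0|+\cdots+|\Delta_{l-1}|)$, so that $1\le j'\le|\Delta_l|$ and, by definition of $p$, $p(j)=p_l(j')+i_l$. Under the gluing the $j$th wall and the $j$th chamber of $\Delta$ are respectively the $j'$th wall and the $j'$th chamber of $\Delta_l$, while the $s$th wall and the $s$th chamber of $\Gamma_{[i_l,i_{l+1}]}$ are $L_{\alpha_{i_l+s}}$ and $C_{i_l+s}$. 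The $p$-pair property then follows at once: as $(\Delta_l,\Gamma_{[i_l,i_{l+1}]})$ is a $p_l$-pair, the $j'$th wall of $\Delta_l$ equals the $p_l(j')$th wall of $\Gamma_{[i_l,i_{l+1}]}$, namely $L_{\alpha_{i_l+p_l(j')}}=L_{\alpha_{p(j)}}$, which is the $j$th wall of $\Delta$.

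The positivity of the sign and of the cosign I would obtain by matching them entry-by-entry with those of the pieces. The $j$th sign entry of $(\Delta,\Gamma)$ is decided by the wall $L_{\alpha_{p(j)}}$ together with the $j$th chamber of $\Delta$ and the $p(j)$th chamber of $\Gamma$; after applying the dictionary these are exactly the data deciding the $j'$th sign entry of $(\Delta_l,\Gamma_{[i_l,i_{l+1}]})$, since $C_{p(j)}=C_{i_l+p_l(j')}$ is the $p_l(j')$th chamber of $\Gamma_{[i_l,i_{l+1}]}$ and the two $\Delta$-chambers coincide. Hence the two sign entries are equal. The same comparison, now with the chambers indexed by $j-1$ and $p(j)-1$, gives the equality of the cosign entries. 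Since each piece $(\Delta_l,\Gamma_{[i_l,i_{l+1}]})$ has positive sign and cosign by Theorem~\ref{theorem:3}, it follows that every entry of the sign and of the cosign of $(\Delta,\Gamma)$ equals $1$.

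I expect the only genuine point of care to be the seams, where $j'=1$. There the chamber of $\Delta$ indexed by $j-1$ entering the cosign is the shared chamber, i.e.\ the final chamber of $\Delta_{l-1}$ and simultaneously the initial chamber of $\Delta_l$; as this is precisely the $0$th, hence the $(j'-1)$th, chamber of $\Delta_l$, the entry-by-entry identification of the cosign persists across the seam without exception. The sign entries, and the cosign entries with $j'>1$, involve only chambers interior to a single piece and are automatic. With the dictionary and this seam check in hand the proof is complete.
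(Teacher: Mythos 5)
Your proof is correct, and since the paper states this proposition without proof, your argument—reducing entry-by-entry to Theorem~\ref{theorem:3} applied to each piece $(\Delta_l,\Gamma_{[i_l,i_{l+1}]})$ via the index dictionary $j\mapsto j'$—is exactly the intended one. The seam case $j'=1$ for the cosign is indeed the only point requiring care, and your observation that the shared chamber is simultaneously the final chamber of $\Delta_{l-1}$ and the $0$th chamber of $\Delta_l$ disposes of it.
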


\begin{example}\rm Let us consider examples where projection and lifting applied successively
produce the same gallery. One obvious case is $\Psi=\Phi$.
The other example is the following.
Let $\Gamma$ be a gallery denoted as in~(\ref{eq:i}) We write $\Gamma$ indicating the segments for each root subsystem
as follows:
$$
\Delta=(\underbrace{C_0,L_{\alpha_1},C_1}_{\{\alpha_1,-\alpha_1\}}\!\!\!\!\!\!\!\!\overbrace{\quad\;,L_{\alpha_2},C_2}^{\{\alpha_2,-\alpha_2\}},\ldots,\underbrace{C_{n-1},L_{\alpha_n},C_n}_{\{\alpha_n,-\alpha_n\}}).
$$
Then we get the decomposition
$$
\Gamma={}^{C_0}(\Gamma_{[0,1]})_{\{\alpha_1,-\alpha_1\}}\cup{}^{C_1}(\Gamma_{[1,2]})_{\{\alpha_2,-\alpha_2\}}\cup\cdots\cup{}^{C_{n-1}}(\Gamma_{[n-1,n]})_{\{\alpha_n,-\alpha_n\}}.
$$
\end{example}

\section{Bott-Samelson varieties}\label{Bott-Samelson varieties}

\subsection{Construction}
Let $G$ be a semisimple complex algebraic group.
We fix a Borel subgroup $B<G$ and a maximal torus $T<B$. We denote by $W$ the Weyl group of $G$ 
and by $\prec$ the Bruhat order on it. 
For any simple reflection $t\in W$, we consider the minimal parabolic subgroup $P_t=B\cup BtB$.

Let $s=(s_1,\ldots,s_n)$ be a sequence of simple reflections. The {\it classical Bott-Samelson} variety 
for $s$ is the quotient variety
$$
P_{s_1}\times\cdots\times P_{s_n}/B^n
$$ 
with the following right action of $B^n$:
$$
(p_1,p_2,\ldots,p_n)(b_1,b_2,\ldots,b_n)=(p_1b_1,b_1^{-1}p_2b_2,\ldots,b_{n-1}^{-1}p_nb_n).
$$
It is convenient here to redefine Bott-Samelson varieties using only compact groups.
Let $C$, $K$ and $C_t$ be the maximal compact subgroups of $G$, $T$ and $P_t$, respectively.  
The {\it compactly defined Bott-Samelson variety} for $s$ is the quotient variety
$$
C_{s_1}\times\cdots\times C_{s_n}/K^n
$$
under the similar right action of $K^n$:
$$
(c_1,c_2,\ldots,c_n)(k_1,k_2,\ldots,k_n)=(c_1k_1,k_1^{-1}c_2k_2,\ldots,k_{n-1}^{-1}c_nk_n).
$$
It is well-known~\cite{GK} that both varieties defined above are homeomorphic 
(for the same sequence of simple reflections). We will denote them by $\BS(s)$, call them simply the {\it Bott-Samelson} variety 
and always use only the definition via the compact groups.

\subsection{Combinatorial galleries}\label{Combinatorial galleries} Let $\Phi$ be the root system of $G$, 
$\Phi^+$ be the system of positive roots defined by the choice of $B$ and $A$ be the fundamental chamber of $\Phi^+$.
We define $\Phi^-=-\Phi^+$. For $\alpha\in\Phi^+$, we write $\alpha>0$ and call this root {\it positive}.
Similarly, for $\alpha\in\Phi^-$, we write $\alpha<0$ and call this root {\it negative}.
We say that roots $\alpha$ and $\beta$ {\it have the same sign} if either $\alpha,\beta>0$ or $\alpha,\beta<0$. 

Let us consider a sequence of simple reflections $s=(s_1,\ldots,s_n)$. 
The elements of the set
$$
\Gal(s)=\{(\gamma_1,\ldots,\gamma_n)\in W^n\suchthat\gamma_i=1\text{ or }\gamma_i=s_i\}.
$$
are called {\it combinatorial galleries}.
We will use the following notation:
$$
\gamma^i=\gamma_1\gamma_2\cdots\gamma_i,\qquad
\bbeta_i(\gamma)=\gamma^i(-\alpha_i),\quad
\widetilde\bbeta_i(\gamma)=\gamma^{i-1}(-\alpha_i),
$$
where $\gamma\in\Gal(s)$ and $s_i=\omega_{\alpha_i}$.
There is the following one-to-one correspondence between $\bigcup_s\Gal(s)$
and the set of all galleries in $\Phi$ starting at $A$:
\begin{equation}\label{eq:8}
\gamma=(\gamma_1,\ldots,\gamma_n)\mapsto \gal(\gamma)=(A,L_{\bbeta_1(\gamma)},\gamma^1A,L_{\bbeta_2(\gamma)},\gamma^2A,\ldots,L_{\bbeta_n(\gamma)},\gamma^nA).
\end{equation}
To specify the final chamber of this gallery, we denote $\ppi(\gamma)=\gamma^n$.
We denote $|\gamma|=n$ and call this number the {\it length} of $\gamma$.
Clearly $|\gamma|=|\gal(\gamma)|$.

There is the {\it folding operator} $\f_i$
on $\Gal(s)$ that multiplies the $i$th element of a combinatorial gallery by $s_i$.
We have
\begin{equation}\label{eq:v}
\bbeta_i(\f_j\gamma)=\left\{\begin{array}{ll}\bbeta_i(\gamma)&\text{ if }i<j;\\\omega_{\bbeta_j(\gamma)}\bbeta_i(\gamma)&\text{ if }i\ge j\end{array}\right.,
\qquad
\tilde\bbeta_i(\f_j\gamma)=\left\{\begin{array}{ll}\widetilde\bbeta_i(\gamma)&\text{ if }i\le j;\\\omega_{\widetilde\bbeta_j(\gamma)}\widetilde\bbeta_i(\gamma)&\text{ if }i> j.\end{array}\right.
\end{equation}

There is also the {\it double folding operator} $\f_{i,j}$, where $i<j$, that is just equal to the composition $\f_i\f_j$
and is applicable to a combinatorial gallery $\gamma$ if and only if $\bbeta_i(\gamma)=\pm\bbeta_j(\gamma)$.
This operator has the property
\begin{equation}\label{eq:vii}
\ppi(\f_{i,j}(\gamma))=\ppi(\gamma),
\end{equation}
whenever it is applicable to $\gamma$.

The set $\Gal(s)$ is identified with the set $\BS(s)^K$ of the $K$-fixed points of $\BS(s)$ by $\gamma\mapsto\gamma K^n$.
For any $K$-equivariant map $\phi:X\to Y$ between sets acted upon by $K$, we denote by $\phi^K:X^K\to Y^K$
the restriction of $\phi$ to the sets of $K$-fixed points.

\subsection{Orders}\label{Orders} Following~\cite{Haerterich}, we introduce the strict partial orders $\lhd$ and $<$ on $\Gal(s)$ by
{\renewcommand{\labelenumi}{{\rm\theenumi}}
\renewcommand{\theenumi}{{\rm(\roman{enumi})}}
\begin{enumerate}\itemindent=-5pt
\item $\delta\lhd\gamma$ if and only if $\delta^i\prec\gamma^i$ for the minimal index $i$ such that $\delta^i\ne\gamma^i$.\\[-8pt]
\item $\delta<\gamma$ if and only if $\ppi(\delta)\,{=}\,\ppi(\gamma)$ and $\delta^i\prec\gamma^i$ for the maximal index $i$ such that $\delta^i\ne\gamma^i$.
\end{enumerate}}
\noindent
This definitions allow reformulations in terms of the operators $\bbeta_i$ and $\tilde\bbeta_i$ introduced above.
{\renewcommand{\labelenumi}{{\rm\theenumi}}
\renewcommand{\theenumi}{{\rm(\roman{enumi}${}'$)}}
\begin{enumerate}\itemindent=-12pt
\item\label{lhd}{} $\delta\lhd\gamma$ if and only if $\bbeta_i(\gamma)>0$ for the minimal index $i$ such that $\delta_i\ne\gamma_i$.\\[-8pt]
\item\label{<} $\delta\,{<}\,\gamma$ if and only if $\ppi(\delta)\,{=}\,\ppi(\gamma)$ and $\widetilde\bbeta_i(\gamma)\;{>}\;0$ for the maximal index $i$ such that $\delta_i\;{\ne}\;\gamma_i$.
\end{enumerate}}

\smallskip

\noindent
We remember here the following simple fact that we will need in the proof of Theorem~\ref{theorem:4}.

\begin{proposition}\label{proposition:5}
Let $x,y\in W$ and $\alpha\in\Phi$. The hyperplane $L_\alpha$ does not separate the chambers $xA$ and $yA$ if and only if 
the roots $x^{-1}\alpha$ and $y^{-1}\alpha$ 
have the same sign.
\end{proposition}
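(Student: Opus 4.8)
The plan is to reduce the separation condition to a comparison of the signs of the linear functional $e\mapsto(e,\alpha)$ on the two chambers. First I would record the following observation: for any chamber $C\in\Ch_\Phi$ and any root $\alpha\in\Phi$, the chamber $C$ is a connected component of $E\setminus\bigcup_{\beta\in\Phi}L_\beta$ and in particular is disjoint from $L_\alpha$. Hence the continuous function $e\mapsto(e,\alpha)$ never vanishes on $C$ and, $C$ being connected, keeps a constant sign there; write $\sigma_C(\alpha)\in\{+,-\}$ for this sign. By definition the hyperplane $L_\alpha$ does not separate $C$ and $D$ precisely when $C$ and $D$ lie in the same open half-space bounded by $L_\alpha$, that is, when $\sigma_C(\alpha)=\sigma_D(\alpha)$.

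Next I would compute $\sigma_{wA}(\alpha)$ for arbitrary $w\in W$. Fix any $e\in A$; then $we\in wA$, and since every generating reflection $\omega_\beta$ of $W$ preserves the scalar product, $w$ acts orthogonally, so $(we,\alpha)=(e,w^{-1}\alpha)$. Because $A$ is the fundamental chamber of $\Phi^+$, we have $(e,\gamma)>0$ for every $\gamma\in\Phi^+$ and $(e,\gamma)<0$ for every $\gamma\in\Phi^-$. As $W$ permutes $\Phi$, the vector $w^{-1}\alpha$ is again a root, and therefore $\sigma_{wA}(\alpha)=+$ if and only if $w^{-1}\alpha>0$. In other words $\sigma_{wA}(\alpha)$ records exactly the sign of the root $w^{-1}\alpha$.

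Combining the two steps, $L_\alpha$ does not separate $xA$ and $yA$ if and only if $\sigma_{xA}(\alpha)=\sigma_{yA}(\alpha)$, which by the computation above holds if and only if the signs of $x^{-1}\alpha$ and $y^{-1}\alpha$ coincide, i.e. if and only if $x^{-1}\alpha$ and $y^{-1}\alpha$ have the same sign in the sense of the definition above. This is precisely the claimed equivalence. The argument is essentially formal; the only point that requires a little care is the identity $(we,\alpha)=(e,w^{-1}\alpha)$, whose validity rests on the orthogonality of the $W$-action, and the remark that $w^{-1}\alpha$ is genuinely a root so that "having a sign" makes sense.
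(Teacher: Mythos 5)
Your proof is correct: reducing ``$L_\alpha$ does not separate $xA$ and $yA$'' to the constancy of the sign of $e\mapsto(e,\alpha)$ on each chamber, and then using the orthogonality of $w$ to identify $\mathrm{sign}\,(we,\alpha)$ with the sign of the root $w^{-1}\alpha$ via a point of the fundamental chamber, is exactly the standard argument. The paper states this proposition without proof as a ``simple fact,'' and your write-up supplies precisely the argument that is implicitly intended.
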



\subsection{$(p,w)$-pairs}\label{(p,w)-pairs} Let $s=(s_1,\ldots,s_m)$ and $r=(r_1,\ldots,r_n)$ be sequences of simple reflections,
$p:[1,m]\to[1,n]$ be an increasing embedding and $w\in W$. A pair of galleries $(\gamma,\delta)\in\Gal(s)\times\Gal(r)$
is called a {\it $(p,w)$-pair} if
$$
\delta^{p(i)}r_{p(i)}(\delta^{p(i)})^{-1}=w\gamma^is_i(\gamma^i)^{-1}w^{-1}
$$
for all $i=1,\ldots,m$. We shall write these equations in either of more convenient forms
\begin{equation}\label{eq:iv}
\bbeta_{p(i)}(\delta)=\epsilon_iw\bbeta_i(\gamma),\quad
\widetilde\bbeta_{p(i)}(\delta)=\mu_iw\widetilde\bbeta_i(\gamma),
\end{equation}
where $\epsilon_i=\pm1$ and $\mu_i=\pm1$. The sequences $(\epsilon_1,\ldots,\epsilon_n)$
and $(\mu_1,\ldots,\mu_n)$ are called the {\it sign} and the {\it cosign}
of the $p$-pair $(\gamma,\delta)$.

There is the following connection between the notion of a $(p,w)$-pair and a $p$-pair
(see Definition~\ref{def:1}). We leave the proof of the following result to the reader.

\begin{proposition}\label{proposition:3} Let $\gamma\in\Gal(s)$ and $\delta\in\Gal(r)$,
$p:[1,|s|]\to[1,|r|]$ be an increasing embedding and $w\in W$.
Then $(\gamma,\delta)$ is a $(p,w)$-pair if and only if $(w\gal(\gamma),\gal(\delta))$ is a $p$-pair.
The signs and cosigns of these pairs coincide.
\end{proposition}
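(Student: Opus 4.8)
The plan is to transport everything through the explicit description (\ref{eq:8}) of the galleries $\gal(\gamma)$ and $\gal(\delta)$, and then read off both assertions from the separation criterion of Proposition~\ref{proposition:5}. Write $\omega_{\alpha_i}=s_i$ and $\omega_{\alpha'_j}=r_j$, so that $\bbeta_i(\gamma)=\gamma^i(-\alpha_i)$, $\widetilde\bbeta_i(\gamma)=\gamma^{i-1}(-\alpha_i)$ and, analogously, $\bbeta_j(\delta)=\delta^j(-\alpha'_j)$, $\widetilde\bbeta_j(\delta)=\delta^{j-1}(-\alpha'_j)$. By (\ref{eq:8}) the $j$th chamber and $j$th wall of $\gal(\delta)$ are $\delta^jA$ and $L_{\bbeta_j(\delta)}$, while those of $w\gal(\gamma)$ are $w\gamma^iA$ and $L_{w\bbeta_i(\gamma)}$ (applying $w$ carries $L_{\bbeta_i(\gamma)}$ to $L_{w\bbeta_i(\gamma)}$). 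Since $|\gamma|=|s|$ and $|\delta|=|r|$, the gallery $w\gal(\gamma)$ plays the role of the short gallery $\Delta$ and $\gal(\delta)$ that of $\Gamma$ in Definition~\ref{def:1}, as dictated by $p:[1,|s|]\to[1,|r|]$.

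First I would settle the equivalence of the two kinds of pair. Because $\gamma^is_i(\gamma^i)^{-1}=\omega_{\gamma^i\alpha_i}=\omega_{\bbeta_i(\gamma)}$ and likewise $\delta^{p(i)}r_{p(i)}(\delta^{p(i)})^{-1}=\omega_{\bbeta_{p(i)}(\delta)}$, conjugation by $w$ gives $w\gamma^is_i(\gamma^i)^{-1}w^{-1}=\omega_{w\bbeta_i(\gamma)}$. The defining equation of a $(p,w)$-pair therefore reads $\omega_{\bbeta_{p(i)}(\delta)}=\omega_{w\bbeta_i(\gamma)}$, and two reflections coincide exactly when their roots are proportional, i.e. when $L_{\bbeta_{p(i)}(\delta)}=L_{w\bbeta_i(\gamma)}$. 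This is precisely the condition $L_{\alpha_{p(i)}}=L_{\beta_i}$ of Definition~\ref{def:1}, so $(\gamma,\delta)$ is a $(p,w)$-pair if and only if $(w\gal(\gamma),\gal(\delta))$ is a $p$-pair.

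Next I match the signs. The sign entry of the $p$-pair records whether $L_{w\bbeta_i(\gamma)}$ separates $w\gamma^iA$ from $\delta^{p(i)}A$. Applying Proposition~\ref{proposition:5} with $\alpha=w\bbeta_i(\gamma)$, $x=w\gamma^i$, $y=\delta^{p(i)}$, I compute $(w\gamma^i)^{-1}w\bbeta_i(\gamma)=(\gamma^i)^{-1}\bbeta_i(\gamma)=-\alpha_i<0$, while the relation $\bbeta_{p(i)}(\delta)=\epsilon_iw\bbeta_i(\gamma)$ from (\ref{eq:iv}) gives $(\delta^{p(i)})^{-1}w\bbeta_i(\gamma)=\epsilon_i(\delta^{p(i)})^{-1}\bbeta_{p(i)}(\delta)=-\epsilon_i\alpha'_{p(i)}$. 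These two roots have the same sign exactly when $\epsilon_i=1$, so the geometric sign equals the sign of the $(p,w)$-pair.

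Finally I match the cosigns, which is the only point needing a small twist: the cosign tests the same wall $L_{w\bbeta_i(\gamma)}$ against the chambers $w\gamma^{i-1}A$ and $\delta^{p(i)-1}A$, and here it is convenient to use $\widetilde\bbeta$ rather than $\bbeta$ as the normal, which is legitimate since $L_{w\bbeta_i(\gamma)}=L_{w\widetilde\bbeta_i(\gamma)}$ (the two roots span the same line). With $\alpha=w\widetilde\bbeta_i(\gamma)$, $x=w\gamma^{i-1}$, $y=\delta^{p(i)-1}$, Proposition~\ref{proposition:5} gives $(w\gamma^{i-1})^{-1}w\widetilde\bbeta_i(\gamma)=-\alpha_i<0$ and, using $\widetilde\bbeta_{p(i)}(\delta)=\mu_iw\widetilde\bbeta_i(\gamma)$ from (\ref{eq:iv}), $(\delta^{p(i)-1})^{-1}w\widetilde\bbeta_i(\gamma)=-\mu_i\alpha'_{p(i)}$; these agree in sign if and only if $\mu_i=1$. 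The main obstacle is thus purely bookkeeping: keeping the two systems of simple roots $\alpha_i$ and $\alpha'_j$ apart, matching the shifted indices $i-1$ and $p(i)-1$ correctly, and remembering to switch from $\bbeta$ to $\widetilde\bbeta$ for the cosign; after that every line is a direct substitution into Proposition~\ref{proposition:5}.
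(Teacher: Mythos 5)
Your proof is correct; the paper explicitly leaves this proposition to the reader, and your argument (identifying $\gamma^is_i(\gamma^i)^{-1}=\omega_{\bbeta_i(\gamma)}$ so that the $(p,w)$-pair equations become equalities of walls, then reading off sign and cosign via Proposition~\ref{proposition:5} applied to $-\alpha_i$ and $-\epsilon_i\alpha'_{p(i)}$, resp. $-\mu_i\alpha'_{p(i)}$) is exactly the intended one. You also, correctly, test the cosign wall against the $(i{-}1)$th chamber of the short gallery and the $(p(i){-}1)$th chamber of the long one, which is the only reading consistent with the $\widetilde\bbeta$ equations (the displayed formula for $\mu_i$ in Definition~\ref{def:1} has the roles of $C$ and $D$ transposed).
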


\subsection{Category $\widetilde\Seq$}\label{Category_widetilde_Seq}
The objects of this category are the sequences of simple reflections including the empty one.
Each morphism $s=(s_1,\ldots,s_m)\to r=(r_1,\ldots,r_n)$ is a triple $(p,w,\phi)$ such that

\begin{enumerate}
\itemsep=3pt
\item\label{fcat:1} $p:[1,m]\to[1,n]$ is an increasing embedding;
\item\label{fcat:2}  $w\in W$;
\item\label{fcat:3}  $\phi:\Gal(s)\to\Gal(r)$ is a map such that $(\gamma,\phi(\gamma))$
                     is a $(p,w)$-pair for any $\gamma\in\Gal(s)$ and
                     $\phi(\f_i\gamma)=\f_{p(i)}\phi(\gamma)$ for any $\gamma\in\Gal(s)$ and $i=1,\ldots,m$.
\end{enumerate}
By~\cite[Lemma~3]{fcat}, it suffices that $(\gamma,\phi(\gamma))$ be a $(p,w)$-pair
just for one $\gamma\in\Gal(s)$. The (co)sign of $(p,w,\phi)$
is defined to be the (co)sign of each pair $(\gamma,\phi(\gamma))$.
By~\cite[Lemma~2]{fcat} and the similar argument for $\widetilde\bbeta_k$
this definition does not depend on the choice of $\gamma$.
The composition law for the triples is $(p',w',\phi')(p,w,\phi)=(p'p,w'w,\phi'\phi)$.


\subsection{Embeddings} The category $\widetilde\Seq$ yields the following result on embeddings. 
\begin{proposition}[\mbox{\cite[Lemma 13]{bt}}]\label{proposition:4}
For any morphism $(p,w,\phi):s\to r$
of the category $\widetilde\Seq$, there exists a topological embedding $\iota:\BS(s)\to\BS(r)$
such that $\iota^K=\phi$ and $\iota(ka)=wkw^{-1}a$ for any $k\in K$ and $a\in\BS(s)$.
\end{proposition}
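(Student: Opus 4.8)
The plan is to construct $\iota$ by hand on group-element representatives, to descend it to the quotients, and to deduce the embedding property from compactness rather than proving it directly. Fix a lift $\dot w\in N_K(T)$ of $w$; since $K$ is the compact torus and hence characteristic in $T$, conjugation by $\dot w$ preserves $K$ and the map $k\mapsto wkw^{-1}$ is independent of the lift. Let $\gamma^\circ=(1,\ldots,1)\in\Gal(s)$ be the straight gallery and put $\delta_0=\phi(\gamma^\circ)$. For every complementary index $j\in[1,n]\setminus\im p$ I would choose a representative $e_j\in N_K(T)\cap C_{r_j}$ of $\delta_{0,j}$ (so $e_j=1$ or a fixed lift of $r_j$); the point of taking the $e_j$ in $N_K(T)$ is that a factor of $K$ can be slid past them by conjugation. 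For a matched index $p(i)$ the $(p,w)$-pair relation evaluated at $\gamma^\circ$ reads
$$
\delta_0^{p(i)}\,r_{p(i)}\,(\delta_0^{p(i)})^{-1}=w\,s_i\,w^{-1},
$$
so $v_i:=(\delta_0^{p(i)})^{-1}w$ conjugates the simple reflection $s_i$ to the simple reflection $r_{p(i)}$; hence any lift $\dot v_i\in N_K(T)$ satisfies $\dot v_i C_{s_i}\dot v_i^{-1}=C_{r_{p(i)}}$.

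I would then define $\iota\langle c_1,\ldots,c_m\rangle=\langle d_1,\ldots,d_n\rangle$ by inserting $e_j$ at each complementary position and a twisted copy $\dot v_i c_i\dot v_i^{-1}$ of $c_i$ at each matched position $p(i)$, with left/right correction factors from $K$ arranged so that the staircase relations of the $K^n$-action close up. \emph{Well-definedness} --- that this formula is constant on $K^m$-orbits and lands in a single $K^n$-orbit, hence descends to a map $\BS(s)\to\BS(r)$ --- is the computational heart of the argument. The mechanism is that, because every inserted background element $e_j$ and every conjugator $\dot v_i$ lies in $N_K(T)$, a factor of $K$ appearing in one slot can be transported to the neighbouring slot by conjugation (which again lands in $K$); threading these transports through the whole sequence and using the displayed relation at the matched slots shows the correction factors can be chosen coherently. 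Continuity of $\iota$ is then immediate, as it is assembled from multiplication, inversion and conjugation in compact Lie groups.

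It remains to check the three asserted properties. The skew-equivariance $\iota(ka)=wkw^{-1}\iota(a)$ follows from the same sliding mechanism: acting on the first factor of $a$ by $k$ and then transporting the resulting $K$-factor through the $e_j$'s and the $\dot v_i$'s to the front of $\iota(a)$ produces exactly the conjugated factor $\dot wk\dot w^{-1}$. For $\iota^K=\phi$ I would argue inductively along the folding operators: by construction $\iota(\gamma^\circ)=\delta_0=\phi(\gamma^\circ)$, and multiplying the $i$th representative by a lift of $s_i$ (that is, applying $\f_i$) turns, under the matched-slot conjugation, into multiplying the $p(i)$th target representative by $\dot v_i\dot s_i\dot v_i^{-1}=\dot r_{p(i)}$ up to $T$, i.e. into $\f_{p(i)}$; since every element of $\Gal(s)$ is obtained from $\gamma^\circ$ by a sequence of foldings and $\phi(\f_i\gamma)=\f_{p(i)}\phi(\gamma)$ by the definition of $\widetilde\Seq$, this gives $\iota^K=\phi$. \emph{Injectivity} holds because the matched-slot maps are the conjugation isomorphisms $c_i\mapsto\dot v_i c_i\dot v_i^{-1}$ and the complementary slots are constant, so $\langle c_1,\ldots,c_m\rangle$ can be recovered from $\langle d_1,\ldots,d_n\rangle$ modulo $K^m$. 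Finally, since $\BS(s)$ is compact and $\BS(r)$ is Hausdorff, the continuous injection $\iota$ is automatically a homeomorphism onto its (closed) image, so it is a topological embedding, completing the proof.

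\textbf{Main obstacle.} The delicate step is well-definedness together with the coherent choice of the correction factors. The difficulty is that $ws_iw^{-1}$ need not be a standard generator, so one cannot simply conjugate each minimal parabolic $P_{s_i}$ onto a standard one by $\dot w$; it is only after absorbing the gallery factor $\delta_0^{p(i)}$ that $v_i=(\delta_0^{p(i)})^{-1}w$ becomes a conjugator of simple roots. Thus the background gallery $\delta_0$ is not mere filler but is forced by, and encodes, the group-theoretic consistency, and verifying that the $(p,w)$-pair relations integrate to a single globally consistent twist (equivalently, that $(w\gal(\gamma),\gal(\delta))$ is a genuine $p$-pair in the sense of Proposition~\ref{proposition:3}) is where essentially all the content lies.
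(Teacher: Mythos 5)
The paper does not actually prove this proposition --- it is imported verbatim from \cite[Lemma 13]{bt} --- so there is no internal proof to compare against; I can only assess your reconstruction, which is the expected construction and is sound in outline: insert fixed representatives of $\phi(\gamma^\circ)$ at the complementary slots, conjugate the genuine coordinates into the correct minimal compact groups at the matched slots, check well-definedness by sliding $K$-factors through normalizer elements, get $\iota^K=\phi$ by induction on foldings, and deduce the embedding property from compactness of $\BS(s)$. Two slips are worth fixing. First, $N_K(T)$ is the wrong group: in this paper $K$ is the compact torus, so $N_K(T)=K$ contains no representatives of Weyl group elements; you want the normalizer of $T$ (equivalently of $K$) inside the maximal compact group $C$. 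Second, your formula at a matched slot is off by a factor that does not lie in $K$: if $\delta_{0,p(i)}=r_{p(i)}$, then $d_{p(i)}$ must contain a lift of $r_{p(i)}$ in addition to $\dot v_ic_i\dot v_i^{-1}$ (otherwise $\iota$ would send $\gamma^\circ$ to the trivial gallery rather than to $\delta_0$), and a ``correction factor from $K$'' cannot supply it. The clean way to organize both the definition and the well-definedness check is via partial products: set $f_i=c_1\cdots c_i$, $i(j)=\max\{i\suchthat p(i)\le j\}$, choose lifts $\widehat{\delta_0^{\,j}}$ of $\delta_0^{\,j}$ in the normalizer, put $g_j=\dot wf_{i(j)}\dot w^{-1}\widehat{\delta_0^{\,j}}$ and $d_j=g_{j-1}^{-1}g_j$. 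Then $d_j$ is a lift of $\delta_{0,j}$ for $j\notin\im p$, while $d_{p(i)}=\widehat{\delta_{0,p(i)}}\cdot\dot v_ic_i\dot v_i^{-1}\in C_{r_{p(i)}}$ by the $(p,w)$-pair relation evaluated at $\gamma^\circ$; the $K^m$-action visibly translates into the $K^n$-action because each $g_j$ differs from $\dot wf_{i(j)}$ by right multiplication by a normalizer element, injectivity is read off from $g_{p(i)}K\mapsto f_iK$, and your compactness argument finishes the proof. Note finally that your identity $\dot v_iC_{s_i}\dot v_i^{-1}=C_{r_{p(i)}}$ is precisely where the compact definition is indispensable: $C_{s_\beta}$ depends only on $\pm\beta$, whereas the minimal parabolics $P_t$ are not permuted by conjugation by Weyl group representatives.
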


\noindent
Constructing $p$-pairs with the help of Theorem~\ref{theorem:3}, we ge the following result. 

\begin{theorem}\label{theorem:4}
Let $r$ be a sequence of simple reflections, $\gamma\in\Gal(r)$ and $\Psi$ be a saturated root subsystem of $\Phi$.
Let $\Delta$ be any gallery lifting $\gal(\gamma)_\Psi$, $w\in W$ be such that $w^{-1}\Delta$ begins at $A$ and
$s$ be the sequence of simple reflections such that $\gal^{-1}(w^{-1}\Delta)\in\Gal(s)$. 
Then there exists a topological embedding $\iota:\BS(s)\to\BS(r)$
mapping $\gal^{-1}(w^{-1}\Delta)$ to $\gamma$ such that $\iota(ka)=wkw^{-1}a$ for any $k\in K$, $a\in\BS(s)$ 
and $\iota^K$ preserves both orders $\lhd$ and $<$.
\end{theorem}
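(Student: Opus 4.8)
The plan is to realize $\iota$ as the embedding attached by Proposition~\ref{proposition:4} to a suitable morphism $(p,w,\phi)$ of the category $\Fold$, and then to read off order preservation from the positivity of the sign and cosign. First I would set $\Gamma=\gal(\gamma)$ and apply Theorem~\ref{theorem:3}: since $\Delta$ lifts $\Gamma_\Psi=\gal(\gamma)_\Psi$, the pair $(\Delta,\Gamma)$ is a $p$-pair of positive sign and cosign, where $p$ is the increasing embedding with image $I_\Psi(\Gamma)$. Writing $\sigma=\gal^{-1}(w^{-1}\Delta)\in\Gal(s)$, so that $\Delta=w\gal(\sigma)$, Proposition~\ref{proposition:3} turns this into the statement that $(\sigma,\gamma)$ is a $(p,w)$-pair of positive sign and cosign.

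I then define $\phi\colon\Gal(s)\to\Gal(r)$ by $\phi(\sigma)=\gamma$ and $\phi(\f_i\epsilon)=\f_{p(i)}\phi(\epsilon)$; this is well defined because foldings at distinct coordinates commute and every element of $\Gal(s)$ is obtained from $\sigma$ by a unique set of foldings. By \cite[Lemma~3]{fcat} it suffices to check the $(p,w)$-pair condition for the single gallery $\sigma$, so $(p,w,\phi)$ is a morphism of $\Fold$, and by \cite[Lemma~2]{fcat} its sign and cosign are those of $(\sigma,\gamma)$, hence positive. Proposition~\ref{proposition:4} now produces $\iota$ with $\iota^K=\phi$ and $\iota(ka)=wkw^{-1}a$; in particular $\iota$ sends $\sigma=\gal^{-1}(w^{-1}\Delta)$ to $\phi(\sigma)=\gamma$.

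It remains to prove that $\phi=\iota^K$ preserves $\lhd$ and $<$, which is the heart of the matter. The combinatorial half is easy: since $\phi\f_i=\f_{p(i)}\phi$ and $p$ is increasing, $\phi(\delta)$ and $\phi(\epsilon)$ differ exactly at the coordinates $p(J)$, where $J$ is the set of coordinates on which $\delta$ and $\epsilon$ differ; hence the minimal and the maximal divergence indices are transported by $p$. The analytic half is a positivity transfer. I would first show that $\bbeta_{p(i)}(\phi(\epsilon))$ and $\widetilde\bbeta_{p(i)}(\phi(\epsilon))$ lie in $\Psi$ for every $\epsilon$ and every $i$: this holds for $\gamma$ by the definition of $I_\Psi$, and it is preserved under each folding $\f_{p(j)}$ because, by~(\ref{eq:v}), the affected roots change by reflection in $\bbeta_{p(j)}(\cdot)\in\Psi$, and $\Psi$ is stable under its own reflections. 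Combining this with the positive sign and cosign, $\bbeta_{p(i)}(\phi(\epsilon))=w\bbeta_i(\epsilon)$ and $\widetilde\bbeta_{p(i)}(\phi(\epsilon))=w\widetilde\bbeta_i(\epsilon)$, shows that the roots $\bbeta_i(\epsilon)$ and $\widetilde\bbeta_i(\epsilon)$ controlling the orders lie in $w^{-1}\Psi$. Finally, the initial chamber $wA$ of $\Delta$ lifts $A_\Psi$, so $\Psi^+=\Psi^+(A_\Psi)\subset\Phi^+(wA)=w\Phi^+$ by~(\ref{eq:10}); thus $w^{-1}$ preserves signs of roots of $\Psi$, and for $\rho\in w^{-1}\Psi$ one gets $\rho>0$ iff $w\rho>0$. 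Applying this to $\rho=\bbeta_i(\epsilon)$ (resp.\ $\widetilde\bbeta_i(\epsilon)$) at the minimal (resp.\ maximal) divergence turns the defining condition of $\lhd$ (resp.\ of $<$) for $\delta,\epsilon$ into the one for $\phi(\delta),\phi(\epsilon)$.

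For the order $<$ one extra point is the clause $\ppi(\delta)=\ppi(\epsilon)$. I would fold $\delta$ to $\epsilon$ one coordinate at a time; each fold multiplies $\ppi$ on the left by a reflection $\omega_{\widetilde\bbeta_{\,\cdot}}$, so $\ppi(\delta)=\ppi(\epsilon)$ forces the product $R$ of these reflections to be $1$. On the image side the corresponding product is $wRw^{-1}=1$, since each reflection is conjugated by $w$ by the positive cosign, whence $\ppi(\phi(\delta))=\ppi(\phi(\epsilon))$. The main obstacle is exactly this positivity bookkeeping: the $(p,w)$-pair equalities only determine $\bbeta_{p(i)}(\phi(\epsilon))$ up to the action of $w$, so order preservation cannot follow from positive sign alone; the decisive input is that the relevant roots are forced into $\Psi$ and that $w$ respects signs on $\Psi$ by the lifting property of the initial chamber.
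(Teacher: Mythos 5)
Your proposal is correct and follows essentially the same route as the paper's proof: Theorem~\ref{theorem:3} and Proposition~\ref{proposition:3} give a $(p,w)$-pair of positive sign and cosign, \cite[Lemma~3]{fcat} and Proposition~\ref{proposition:4} produce $\iota$, and order preservation is obtained by showing inductively via~(\ref{eq:v}) that the roots $\bbeta_{p(i)}(\phi(\cdot))$, $\widetilde\bbeta_{p(i)}(\phi(\cdot))$ stay in $\Psi$ and then transferring positivity across $w$ using the fact that the initial chamber $wA$ of $\Delta$ lifts $A_\Psi$. Your two local deviations --- deducing that $w^{-1}$ preserves signs on $\Psi$ from the inclusion $\Psi^+(A_\Psi)\subset\Phi^+(wA)=w\Phi^+$ rather than from the wall-separation criterion of Proposition~\ref{proposition:5}, and obtaining $\ppi(\phi(\lambda))=\ppi(\phi(\rho))$ by conjugating the product of folding reflections by $w$ (using the cosign identities) rather than via the double folding operators and~(\ref{eq:vii}) --- are both valid and equivalent to the corresponding steps in the paper.
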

\begin{proof} We denote $\mu=\gal^{-1}(w^{-1}\Delta)$ for brevity and consider the map $p:[1,|s|]\to[1,|r|]$, 
which is the increasing imbedding with image $I_\Psi(\gal(\gamma))$. By Theorem~\ref{theorem:3}, 
the pair $(\Delta,\gal(\gamma))$ is a $p$-pair of positive sigh and cosign. As
$$
(\Delta,\gal(\gamma))=(ww^{-1}\Delta,\gal(\gamma))=(w\gal(\mu),\gal(\gamma)),
$$
we obtain from Proposition~\ref{proposition:3} that $(\mu,\gamma)$ is a $(p,w)$-pair of positive sign and cosign.
By~\cite[Lemma 3]{fcat}, we obtain a map $\phi:\Gal(s)\to\Gal(r)$ such that $\phi(\mu)=\gamma$ 
and $(p,w,\phi):s\to r$ is a morphism in $\widetilde\Seq$. Applying Proposition~\ref{proposition:4},
we obtain the required map $\iota$. 

It remains to prove that $\phi=\iota_K$ preserves both orders $\lhd$ and $<$.
Let $\lambda,\rho\in\Gal(s)$ be such that $\lambda\lhd\rho$. Let $i$ be the minimal index such that $\lambda_i\ne\rho_i$.
Then there exist indices $i_1,\ldots,i_m>i$ such that $\rho=\f_i\f_{i_1}\cdots \f_{i_k}\lambda$.
Applying property~\ref{fcat:3} of the definition of $\widetilde\Seq$, we get
$\phi(\rho)=\f_{p(i)}\f_{p(i_1)}\cdots \f_{p(i_k)}\phi(\lambda)$. As $p$ is increasing, we get 
$p(i_1),\ldots,p(i_k)>p(i)$, which proves that $\phi(\lambda)_{p(i)}\ne\phi(\rho)_{p(i)}$ but
$\phi(\lambda)_j=\phi(\rho)_j$ for any $1\le j<p(i)$. 

Remember that $p(j)\in I_\Psi(\gal(\gamma))$ for any $j\in[1,|s|]$ by the definition of $p$. Hence from~(\ref{eq:8}),
we get 
\begin{equation}\label{eq:vi}
\bbeta_{p(j)}(\gamma)\in\Psi.
\end{equation}
Now we consider a representation $\rho=\f_{j_m}\cdots\f_{j_1}\mu$.
Applying $\phi$ to both sides, we get by part~\ref{fcat:3} of the definition of $\widetilde\Seq$
that $\phi(\rho)=\f_{p(j_m)}\cdots\f_{p(j_1)}\gamma$. We are going to prove by induction on $m$ 
that $\bbeta_{p(j)}(\phi(\rho))\in\Psi$ for any $j\in[1,|s|]$. The case $m=0$ follows from~(\ref{eq:vi}).
Suppose that $m>0$ and the claim holds for $m-1$ folding operators. By the first formula of~(\ref{eq:v}) 
and the inductive hypothesis, we get for some $\epsilon\in\{0,1\}$ that
$$
\bbeta_{p(j)}(\phi(\rho))=\bbeta_{p(j)}(\f_{p(j_m)}\cdots\f_{p(j_1)}\gamma)
=\omega_{\bbeta_{p(j_m)}(\f_{p(j_{m-1})}\cdots\f_{p(j_1)}\gamma)}^\epsilon \bbeta_{p(j)}(\f_{p(j_{m-1})}\cdots\f_{p(j_1)}\gamma)\in\Psi.
$$

By the first formula of~(\ref{eq:iv})
and the positivity of the sign, we get $\bbeta_{p(i)}(\phi(\rho))=w\bbeta_i(\rho)$.
As $\Delta$ lifts $\gal(\gamma)_\Psi$, the initial chamber $wA$ of $\Delta$ lifts the initial chamber $A_\Psi$ of $\gal(\gamma)_\Psi$.
Hence by Lemma~\ref{lemma:1}, we get $(wA)_\Psi=A_\Psi$, whence no wall $L_\beta$ for $\beta\in\Psi$ separates $A$ and $wA$. 
In particular the wall $L_{\bbeta_{p(i)}(\phi(\rho))}$ does not separate these chambers. 
By Proposition~\ref{proposition:5}, we get that the roots $\bbeta_{p(i)}(\phi(\rho))$ and 
$w^{-1}\bbeta_{p(i)}(\phi(\rho))=\bbeta_i(\rho)$ have the same sign. However, as $\lambda\lhd\rho$, 
the latter root is positive by~\ref{lhd} from Section~\ref{Orders}. 
Hence $\bbeta_{p(i)}(\phi(\rho))>0$ and $\phi(\lm)\lhd\phi(\rho)$ again by~\ref{lhd}.

The proof of the fact that $\iota^K$ preserves $<$ is very similar. Indeed, let 
$\lambda,\rho\in\Gal(s)$ be such that $\lambda<\rho$. Let $i$ be the maximal index such that $\lambda_i\ne\rho_i$.
Similarly to the above arguments, we get $\phi(\lambda)_j=\phi(\rho)_j$ for any $p(i)<j\le|s|$.
As $\ppi(\lambda)=\ppi(\rho)$, we get $\rho=\f_{i_1,j_1}\cdots\f_{i_k,j_k}\lambda$ 
for some sequence of the double folding operators. Applying property~\ref{fcat:3} of the definition of $\widetilde\Seq$, we get
$\phi(\rho)=\f_{p(i_1),p(j_1)}\cdots \f_{p(i_k),p(j_k)}\phi(\lambda)$. Hence $\ppi(\phi(\rho))=\ppi(\phi(\lambda))$
by~(\ref{eq:vii}).

Just as for the case of $\lhd$, we can prove that $\widetilde\bbeta_{p(i)}(\phi(\rho))\in\Psi$. 
By the second formula of~(\ref{eq:iv})
and the positivity of the cosign, we get $\widetilde\bbeta_{p(i)}(\phi(\rho))=w\widetilde\bbeta_i(\rho)$.
As $L_{\tilde\bbeta_{p(i)}(\phi(\rho))}=L_{\bbeta_{p(i)}(\phi(\rho))}$ and we proved above that 
this wall does not separate $A$ and $wA$, we get by Proposition~\ref{proposition:5} that
the roots $\widetilde\bbeta_{p(i)}(\phi(\rho))$ and $w^{-1}\widetilde\bbeta_{p(i)}(\phi(\rho))=\widetilde\bbeta_i(\rho)$ 
have the same sign. However, as $\lambda<\rho$, the latter root is positive by~\ref{<} from Section~\ref{Orders}. 
Hence $\bbeta_{p(i)}(\phi(\rho))>0$ and $\phi(\lm)<\phi(\rho)$ again by~\ref{<}.
\end{proof}

\begin{remark}\rm Proposition~\ref{proposition:4} allows us to construct embeddings of Bott-Samelson varieties
for lifings $\Delta$ different from that used in Theorem~\ref{theorem:4}, for example, for several such liftings 
glued together as in Example 2 below. Wether the orders $\lhd$ and $<$ are preserved is an open question. 
\end{remark}

\subsection{Example 2} Let $\Phi$ be the root system of type $A_3$ with
the following Dynkin diagram:
\begin{center}
\setlength{\unitlength}{1.2mm}
\begin{picture}(45,0)
\put(0,0){\circle{1}}
\put(10,0){\circle{1}}
\put(20,0){\circle{1}}

\put(0.5,0){\line(1,0){9}}
\put(10.5,0){\line(1,0){9}}

\put(-1,-3.5){$\scriptstyle\alpha_1$}
\put(9,-3.5){$\scriptstyle\alpha_2$}
\put(19,-3.5){$\scriptstyle\alpha_3$}

\end{picture}
\end{center}

\hspace{20mm}

\noindent
The simple reflections are $\omega_i=\omega_{\alpha_i}$. Let us take
$r=(\omega_2,\omega_3,\omega_2,\omega_1,\omega_2, \omega_1,\omega_3,\omega_1)$
and $\gamma=(1       ,\omega_3,       1,\omega_1,\omega_2, \omega_1,1,\omega_1)\in\Gal(r)$ 
and consider the gallery $\Gamma=\gal(\gamma)$. We draw $\Gamma$ indicating the segments
where the root subsystems $\Psi_1$ and $\Psi_2$ are applied as follows:
\begin{multline*}
\Gamma=
(\overbrace{A,L_{\alpha_2},A,L_{\alpha_3},\omega_3A,L_{\alpha_2+\alpha_3},\omega_3A,L_{\alpha_1},\omega_3\omega_1A,L_{\alpha_1+\alpha_2+\alpha_3},\omega_3\omega_1\omega_2A}^{\Psi_1},\!\!\!\!\!\!\!\!\!\!\!\!\!\!\!\!\!\!\!\!\!\!\!\!\!\!\underbrace{\qquad\qquad}_{\Psi_2}\\
 \underbrace{L_{\alpha_2+\alpha_3},\omega_3\omega_1\omega_2\omega_1A,L_{\alpha_1+\alpha_2},\omega_3\omega_1\omega_2\omega_1A,L_{\alpha_2+\alpha_3},\omega_3\omega_1\omega_2A}_{\Psi_2}).
\end{multline*}
Here as usual $A$ denotes the fundamental chamber. We are going to project, lift and glue according to the general plan 
described in Section~\ref{Gluing}. During this process, we will choose the root systems $\Psi_1$ and $\Psi_2$.

First, we take $\Psi_1=\R\{\alpha_1+\alpha_2,\alpha_3\}\cap\Phi$. Then we get $\Psi_1^s(A_{\Psi_1})=\{\alpha_1+\alpha_2,\alpha_3\}$
and
$$
(\Gamma_{[0,5]})_{\Psi_1}=(A_{\Psi_1},L_{\alpha_3},\omega_3A_{\Psi_1},L_{\alpha_1+\alpha_2+\alpha_3},\omega_3\omega_2A_{\Psi_1}).
$$
Now we are going to lift this gallery. In order to do it, we need lift
its initial chamber $A_{\Psi_1}$. There are two possible liftings $\omega_1A$
and $\omega_2\omega_3A$. Let us take the first one.
We get
$$
{}^{\omega_1A}(\Gamma_{[0,5]})_{\Psi_1}=(\omega_1A,L_{\alpha_3},\omega_1\omega_3A,L_{\alpha_1+\alpha_2+\alpha_3},\omega_1\omega_3\omega_2A).
$$
We have $\Phi^s(\omega_1\omega_3\omega_2A)=\{\alpha_2+\alpha_3, \alpha_1+\alpha_2, -\alpha_1-\alpha_2-\alpha_3\}$.
Thus we can take $\Psi_2=\R\{\alpha_2+\alpha_3\}\cap\Phi=\{\alpha_2+\alpha_3,-\alpha_2-\alpha_3\}$.
We get
$$
(\Gamma_{[5,8]})_{\Psi_2}=(\omega_1\omega_3\omega_2A_{\Psi_2},L_{\alpha_2+\alpha_3},\omega_1\omega_3\omega_2\omega_1A_{\Psi_2},L_{\alpha_2+\alpha_3},\omega_1\omega_3\omega_2A_{\Psi_2}).
$$
Then we lift
$$
{}^{\omega_1\omega_3\omega_2A}(\Gamma_{[5,8]})_{\Psi_2}=(\omega_1\omega_3\omega_2A,L_{\alpha_2+\alpha_3},\omega_1\omega_3\omega_2\omega_1A,L_{\alpha_2+\alpha_3},\omega_1\omega_3\omega_2A).
$$
Finally we glue both liftings and get the gallery
\begin{multline*}
\Delta={}^{\omega_1A}(\Gamma_{[0,5]})_{\Psi_1}\cup{}^{\omega_3\omega_2\omega_1A}(\Gamma_{[5,8]})_{\Psi_2}\\
=(\omega_1A,L_{\alpha_3},\omega_1\omega_3A,L_{\alpha_1+\alpha_2+\alpha_3},\omega_1\omega_3\omega_2A,L_{\alpha_2+\alpha_3},\omega_1\omega_3\omega_2\omega_1A,L_{\alpha_2+\alpha_3},\omega_1\omega_3\omega_2A).
\end{multline*}
Hence it is clear that $(\Delta,\Gamma)$ is a $p$-pair, where $p:[1,4]\to\{2,5,6,8\}$
is the increasing bijection.
Multiplying by $\omega_1$, we get the gallery starting at the fundamental chamber
$$
\omega_1\Delta=(A,L_{\alpha_3},\omega_3A,L_{\alpha_2+\alpha_3},\omega_3\omega_2A,L_{\alpha_1+\alpha_2+\alpha_3},\omega_3\omega_2\omega_1A,L_{\alpha_1+\alpha_2+\alpha_3},\omega_3\omega_2A).
$$
We get the combinatorial gallery $\delta=\gal^{-1}(\omega_1\Delta)=(\omega_3,\omega_2,\omega_1,\omega_1)\in\Gal(s)$ 
for $s=(\omega_3,\omega_2,\omega_1,\omega_1)$. The reader can easily check that
$(\delta,\gamma)$ is a $(p,\omega_1)$-pair of positive sign and cosign.
By Proposition~\ref{proposition:4}, we get an embedding
$$
\iota:\BS(\omega_3,\omega_2,\omega_1,\omega_1)\hookrightarrow\BS(\omega_2,\omega_3,\omega_2,\omega_1,\omega_2, \omega_1,\omega_3,\omega_1),
$$
such that $\iota(ka)=\omega_1k\omega_1^{-1}a$ for any $k\in K$ and $a\in\BS(s)$ and
$$
\iota(         \omega_3^{\epsilon_1},\omega_2^{\epsilon_2},\omega_1^{\epsilon_3},\omega_1^{\epsilon_4})
=   (       1,\omega_3^{\epsilon_1},                    1,\omega_1,\omega_2^{\epsilon_2}, \omega_1^{\epsilon_3},1,\omega_1^{\epsilon_4})
$$
for any $\epsilon_1,\epsilon_2,\epsilon_3,\epsilon_4\in\{0,1\}$. 
Here we used the fact that $(p,\omega_1,\iota^K)$ is a morphism in $\widetilde\Seq$.

\bigskip

\def\sep{\\[-7pt]}

\bigskip

\end{document}